\documentclass[a4paper]{article}
\usepackage{amsthm,amssymb,amsmath,enumerate,graphicx,epsf}

\newcommand{\COLORON}{0}
\newcommand{\NOTESON}{0}
\newcommand{\Debug}{0}
\usepackage{bbold}
\usepackage[usenames]{color} 
\usepackage{amsthm,amssymb,amsmath,bbm,enumerate,graphicx,epsf,stmaryrd}
\usepackage[bookmarks, colorlinks=false, breaklinks=true]{hyperref} 

\hyphenation{com-pac-ti-fi-cation}

\newcommand{\comment}[1]{}
\newcommand{\COMMENT}[1]{}

\definecolor{darkgray}{rgb}{0.3,0.3,0.3}
\newcommand{\defi}[1]{{\color{darkgray}\emph{#1}}}

\newcommand{\acknowledgement}{\section*{Acknowledgement}}



\comment{
	\begin{lemma}\label{}	
\end{lemma}
\begin{proof}

\end{proof}

\begin{theorem}\label{}
\end{theorem} 
\begin{proof} 	

\end{proof}

}



\newtheorem{proposition}{Proposition}[section]
\newtheorem{definition}[proposition]{Definition}
\newtheorem{theorem}[proposition]{Theorem}
\newtheorem{corollary}[proposition]{Corollary}

\newtheorem{lemma}[proposition]{Lemma}

\newtheorem{conjecture}{{Conjecture}}[section]

\newtheorem{problem}[conjecture]{{Problem}}

\newtheorem{examp}[proposition]{Example}
\newtheorem{claim}{Claim}



\newcommand{\FIG}{0}

\ifnum \NOTESON = 1 \newcommand{\note}[1]{ 

\hspace*{-30pt}
	{\color{blue}  NOTE: \color{Turquoise}{\small  \tt \begin{minipage}[c]{1.1\textwidth}  #1 \end{minipage} \ignorespacesafterend }} 
	
	}
\else \newcommand{\note}[1]{} \fi

\newcommand{\afsubm}[1]{ \ifnum \Debug = 1 {\mymargin{#1}}
\fi} 

\ifnum \Debug = 1 
\else  \fi

\ifnum \FIG = 1 \newcommand{\fig}[1]{Figure ``{#1}''}
\else \newcommand{\fig}[1]{Figure~\ref{#1}} \fi

\ifnum \FIG = 1 
\else  \fi

\ifnum \Debug = 1 \usepackage[notref,notcite]{showkeys}
\fi

\ifnum \COLORON = 0 \renewcommand{\color}[1]{}
\fi



\newcommand{\R}{\ensuremath{\mathbb R}}

\newcommand{\Z}{\ensuremath{\mathbb Z}}

\newcommand{\BS}{\ensuremath{\mathbb S}}

\newcommand{\cf}{\ensuremath{\mathcal F}}


\newcommand{\sm}{\backslash}

\makeatletter
\DeclareRobustCommand{\cev}[1]{%
  \mathpalette\do@cev{#1}%
}
\newcommand{\do@cev}[2]{%
  \fix@cev{#1}{+}%
  \reflectbox{$\m@th#1\vec{\reflectbox{$\fix@cev{#1}{-}\m@th#1#2\fix@cev{#1}{+}$}}$}%
  \fix@cev{#1}{-}%
}
\newcommand{\fix@cev}[2]{%
  \ifx#1\displaystyle
    \mkern#23mu
  \else
    \ifx#1\textstyle
      \mkern#23mu
    \else
      \ifx#1\scriptstyle
        \mkern#22mu
      \else
        \mkern#22mu
      \fi
    \fi
  \fi
}

\makeatother


 

\newcommand{\g}{\ensuremath{G\ }}






\newcommand{\Lr}[1]{Lemma~\ref{#1}}

\newcommand{\Tr}[1]{Theorem~\ref{#1}}
\newcommand{\Trs}[1]{Theorems~\ref{#1}}
\newcommand{\Sr}[1]{Section~\ref{#1}}

\newcommand{\Prr}[1]{Pro\-position~\ref{#1}}

\newcommand{\Dr}[1]{De\-fi\-nition~\ref{#1}}



\newcommand{\Cg}{Cayley graph}

\renewcommand{\iff}{if and only if}
\newcommand{\fe}{for every}

\newcommand{\st}{such that}

\newcommand{\ti}{there is}

\newcommand{\obda}{without loss of generality}

\newcommand{\wrt}{with respect to}





\newcommand{\mymargin}[1]{
 \ifnum \Debug = 1
  \marginpar{%
    \begin{minipage}{\marginparwidth}\small%
      \begin{flushleft}%
        {\color{blue}#1}%
      \end{flushleft}%
   \end{minipage}%
  }%
 \fi
}%

\newcommand{\mySection}[2]{}














\usepackage{mathtools,enumitem}
\usepackage[percent]{overpic}

\usepackage{authblk}
\usepackage{hyperref}
\RequirePackage{latexsym} \RequirePackage{amsthm}
\RequirePackage{amsmath}

\usepackage{enumerate}
\RequirePackage{amssymb} \RequirePackage{makeidx}
\usepackage{dsfont}
\usepackage{mathrsfs}



\newcommand{\sh}{\mathrm{Shadow}}
\renewcommand{\COMMENT}[1]{}

\usepackage{authblk}

\begin{document}

\title{$2$-complexes with unique embeddings in 3-space}

\author[1]{Agelos Georgakopoulos\thanks{Supported by the European Research Council (ERC) under the European Union's Horizon 2020 research and innovation programme (grant agreement No 639046), and by EPSRC grants EP/V009044/1 and EP/V048821/1.}}

\author[2]{Jaehoon Kim\thanks{Supported by the Leverhulme Trust Early Career Fellowship ECF-2018-538, by the POSCO Science Fellowship of POSCO TJ Park Foundation, and by the KAIX Challenge program of KAIST Advanced Institute for Science-X.}}
\affil[1]{{Mathematics Institute}\\
 {University of Warwick}\\
  {CV4 7AL, UK}}
\affil[2]{Department of Mathematical Sciences, KAIST, South Korea 34141}

\date{\today}
\maketitle

\begin{abstract}
A well-known theorem of Whitney states that a 3-connected planar graph admits an essentially unique embedding into the 2-sphere. We prove a 3-dimensional analogue:  a simply-connected $2$-complex every link graph of which is 3-connected admits an essentially unique locally flat embedding into the 3-sphere, if it admits one at all. This can be thought of as a generalisation of the 3-dimensional Schoenflies theorem.
\end{abstract}

{\bf{Keywords:} } $2$-complex, unique embedding, Whitney theorem, Schoenflies theorem.\\

{\bf{MSC 2020 Classification:}} 05C10, 57M15, 57M99, 57N35, 54C25.

\section{Introduction}

The following classical theorem of Whitney states that a $3$-connected planar graph admits an essentially unique embedding into the plane or the 2-sphere. A graph is \defi{$k$-connected}, if it  remains connected after removing any set of $k-1$ vertices.

\begin{theorem}[{Whitney's theorem  \cite[Theorem~11]{whitney_congruent_1932}, \cite[Theorem~4.3.2]{diestelBook05}}] \label{Whitney}

Let\\ \g  be a 3-connected, finite, planar graph. Then \fe\ two embeddings $\phi,\psi: G \to \BS^2$, \ti\ a homeomorphism $\alpha: \BS^2 \to \BS^2$ \st\ $\psi= \alpha \circ \phi$. 
\end{theorem}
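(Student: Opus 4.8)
The plan is to reduce the topological uniqueness to a purely combinatorial fact, namely that the family of \emph{facial cycles} of an embedding of $G$ into $\BS^2$ depends only on the abstract graph $G$, and then to build the homeomorphism $\alpha$ face by face. Throughout one may assume $G$ has at least $4$ vertices (the only smaller $3$-connected graph is a triangle, for which the statement is immediate), so that $G$ is not itself a cycle. The two standard preliminaries I would record first are that in a $2$-connected plane graph every face is an open disc and its boundary is a cycle of $G$.

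The crux is then the lemma where $3$-connectivity enters: \emph{in a $3$-connected plane graph a cycle $C$ bounds a face if and only if $C$ is induced (chordless) and non-separating, i.e. $G - V(C)$ is connected.} The backward direction is the clean one. If $C$ is induced and non-separating, then $G - V(C)$, being connected, lies entirely in one of the two open discs into which the circle $\phi(C)$ cuts $\BS^2$; the opposite disc $R$ then contains no vertex of $G$, and it contains no edge interior either, since such an edge would have both endpoints on $\phi(C)$ and hence be a chord, which is excluded. Thus $R$ is a face bounded by $C$. The forward direction I would argue by contradiction, manufacturing a $2$-separator in each case. If a facial cycle $C$ bounding the face $f$ had a chord $xy$, that chord would lie in the disc $R$ opposite $f$ and split $R$ in two; as $x,y$ are non-adjacent along $C$, each arc of $C$ carries an internal vertex, no edge may cross the chord, and so $\{x,y\}$ separates $G$. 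Likewise, if $G - V(C)$ were disconnected, two of its components would meet $\phi(C)$ along non-interleaving boundary arcs, and a pair of vertices of $C$ separating those arcs would again be a $2$-separator.

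Because this characterisation makes no reference to the embedding, any two embeddings $\phi,\psi$ of $G$ share \emph{the same} family of facial cycles. Moreover, on at least $4$ vertices distinct faces have distinct boundary cycles (otherwise $G$ would equal that cycle and be merely $2$-connected), so sending each face to its boundary is a bijection between faces and facial cycles. This lets me match the faces of $\phi$ with the faces of $\psi$ canonically, by equality of boundary cycles, which is the combinatorial data that survives passage between embeddings.

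Finally I would assemble $\alpha$. On the one-skeleton set $\alpha := \psi \circ \phi^{-1}\colon \phi(G)\to\psi(G)$, a homeomorphism carrying $\phi(C)$ onto $\psi(C)$ for every cycle $C$. For each facial cycle $C$, with face $f$ of $\phi$ and matching face $f'$ of $\psi$, the closures $\overline f,\overline{f'}$ are closed discs by Jordan--Schoenflies, and the boundary homeomorphism $\alpha|_{\phi(C)}\colon \phi(C)\to\psi(C)$ extends to a homeomorphism $\overline f\to\overline{f'}$ (for instance by the Alexander trick). These extensions agree with $\psi\circ\phi^{-1}$ on the shared $1$-skeleton, so by the pasting lemma they glue to a continuous bijection $\alpha\colon\BS^2\to\BS^2$ with $\psi=\alpha\circ\phi$; being a continuous bijection from a compact space to a Hausdorff one, $\alpha$ is a homeomorphism. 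I expect the main obstacle to be the facial-cycle lemma, and specifically its forward direction, where each failure mode — a chord, or a disconnection of $G-V(C)$ — must be converted into an explicit $2$-separator by exploiting the planar embedding; the gluing step afterwards is routine point-set topology.
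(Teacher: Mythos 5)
The paper does not prove this statement at all: it is quoted as a classical result, with the proof delegated to Whitney's original paper and to Diestel's book (Theorem~4.3.2 there, via Proposition~4.2.7). So there is no in-paper argument to compare against; your proposal should be judged against the cited sources, and it is in fact the standard textbook route. Your key lemma is exactly Tutte's characterisation of face boundaries in $3$-connected plane graphs as the induced non-separating cycles, which is the combinatorial heart of Diestel's treatment, and your final gluing step (Jordan--Schoenflies on each face, the Alexander trick on the boundary circle, pasting finitely many closed pieces, and then continuous bijection from a compact space to a Hausdorff space) is the standard way to upgrade the combinatorial equivalence of embeddings to an actual homeomorphism of $\BS^2$; the paper even records that last point as a lemma elsewhere. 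The one place where your sketch is thinner than it should be is the disconnected case of the forward direction of the facial-cycle lemma: to get the $2$-separator you need first that every component $D$ of $G-V(C)$ has at least three attachment vertices on $C$ (otherwise its neighbourhood is already a $\le 2$-separator), and then that a second component $D'$ is confined, by planarity, to one of the regions into which $D$ and its legs cut the disc opposite the face --- the two consecutive attachments of $D$ bounding that region, together with a third attachment of $D$ lying outside it, are what make the separation genuine. That is a fillable gap, not a flaw; with it filled, your argument is correct and is essentially the proof the paper implicitly relies on.
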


This can be thought of as a generalisation of the Schoenflies theorem, which can be phrased as saying that any two embeddings of $\BS^1$ in $\BS^2$ differ by an automorphism of $\BS^2$. The Schoenflies theorem has been generalised to higher dimensions \cite{BrownGS, MazurGS}, and its 3-dimensional version reads as follows

\begin{theorem}[{Generalised Schoenflies theorem \cite{BrownGS, MazurGS}}] \label{GST}
If $K\subset \BS^3$ is homeomorphic to $\BS^2$ and locally flat, then there is an automorphism of $\BS^3$ mapping its equator (which is homeomorphic to $\BS^2$) to $K$. 
\end{theorem}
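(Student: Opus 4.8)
The plan is to deduce the statement from Brown's bicollar theorem, which isolates the genuinely hard content. First I would record that local flatness of $K\isom\BS^2$ in $\BS^3$ is exactly what produces a \emph{bicollar}: an embedding $h\colon \BS^2\times[-1,1]\to\BS^3$ with $h(\BS^2\times\{0\})=K$. Local flatness gives, near each point, a chart in which $K$ looks like $\R^2\subset\R^3$; patching these via the collaring theorem for locally flat submanifolds of the compact manifold $K$ yields the two-sided collar. By the Jordan--Brouwer separation theorem, $K$ separates $\BS^3$ into two open connected regions $A,B$ with $\overline A\cap\overline B=K$, and the whole theorem reduces to the single claim
$$\overline A \isom \D^3 \qquad\text{and}\qquad \overline B \isom \D^3,$$
where $\D^3$ denotes the closed $3$-ball. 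That the local flatness hypothesis is essential is shown by the Alexander horned sphere, a wild $\BS^2\subset\BS^3$ one of whose complementary domains is not simply connected, hence not a ball.

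Second, I would show how this claim yields the automorphism. Write $\BS^3=H_+\cup H_-$ as the union of its two closed hemispheres glued along the standard equator $E\isom\BS^2$, so $H_\pm\isom\D^3$. Choosing homeomorphisms $\overline A\to H_+$ and $\overline B\to H_-$ and combining them gives a homeomorphism $\BS^3\to\BS^3$ carrying $K$ to $E$ --- \emph{provided} the two maps agree on the common boundary $K\to E$. To arrange this I would fix the map on $\overline A$, read off the boundary homeomorphism $g\colon K\to E$ it induces, and then choose the map on $\overline B$ so that it restricts to the same $g$; the freedom to do so comes from the collar, since any self-homeomorphism of $\BS^2\isom K$ extends across $h(\BS^2\times[0,1])$ and can be used to absorb a boundary mismatch. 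Inverting gives the desired automorphism $\alpha$ with $\alpha(E)=K$.

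Third, and this is the crux, I would prove $\overline A\isom\D^3$ by Brown's device of collapsing the opposite domain. Form the quotient $q\colon\BS^3\to\BS^3/\overline B$ that crushes $\overline B$ to a single point $b$. The decisive lemma is that $\BS^3/\overline B\isom\BS^3$. Granting it, $q$ restricts to a homeomorphism of the open region $A$ onto $(\BS^3/\overline B)\setminus\{b\}\isom\BS^3\setminus\{\mathrm{pt}\}\isom\R^3$, so $A$ is an open $3$-cell; the inner collar $h(\BS^2\times[0,1])\subset\overline A$ then upgrades this to $\overline A\isom\D^3$, an open ball capped off along a collared boundary sphere being a closed ball. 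Thus everything hinges on the decomposition lemma that collapsing one closed complementary domain of a bicollared sphere returns $\BS^3$.

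I expect this decomposition lemma to be the main obstacle, and it is precisely where local flatness is indispensable. The naive attempt --- to write $\overline B=\bigcap_k C_k$ as a nested intersection of closed $3$-cells $C_k$ (the regions cut off by the pushed spheres $h(\BS^2\times\{-1/k\})$) and invoke cellularity --- is essentially circular, since knowing each $C_k$ is a ball is no easier than the original problem: the collar controls only $h(\BS^2\times[-1,1])$ and says nothing about the topology of $\overline B$ beyond it. Brown's genuine resolution is a shrinking argument: one verifies a Bing-type shrinking criterion for the decomposition of $\BS^3$ whose sole nondegenerate element is $\overline B$, constructing homeomorphisms of $\BS^3$ that compress the quotient arbitrarily close to a homeomorphism using the product structure of the collar, without ever isotoping the possibly-wild set $\overline B$ to a point. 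As an alternative I would keep Mazur's infinite swindle in reserve: telescoping the alternating union $\overline A\cup\overline B\cup\overline A\cup\overline B\cup\cdots$ glued along the collars and comparing two groupings forces $\overline A\isom\D^3$; it is slicker but in its classical form requires an extra semilinearity hypothesis (later removed by Morse), whereas the availability of topological bicollars makes Brown's shrinking route apply to an arbitrary locally flat sphere directly.
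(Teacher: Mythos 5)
The paper does not actually prove this statement: it is quoted as a classical theorem of Brown and Mazur and used as a black box (in \Lr{lem: base case}), so there is no in-paper proof to compare against. Your proposal is a sketch of Brown's published argument, and its overall architecture --- bicollar from local flatness, Jordan--Brouwer separation, reduction to showing each closed complementary domain is a ball, and a decomposition/shrinking lemma as the crux --- is the right one. You are also correctly upfront that the shrinking lemma carries essentially all of the difficulty and that you are describing rather than proving it.

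There is, however, one step presented as routine that is a genuine gap: the passage from ``$A$ is an open $3$-cell'' to ``$\overline A\isom\D^3$'' via the slogan that an open ball capped off along a collared boundary sphere is a closed ball. That slogan is not an elementary fact. Given the collar $c$ of $K$ in $\overline A$ and a homeomorphism $A\isom\R^3$, the set $\overline A\setminus c(K\times[0,1))$ is precisely the closure of a complementary domain of the bicollared sphere $c(K\times\{1\})$ sitting inside $A\isom\R^3$ --- so identifying it with a ball is exactly the Schoenflies problem you set out to solve; the alternative routes to the capping statement (the annulus theorem, or in dimension $3$ Moise's triangulation theorem plus the Poincar\'e conjecture plus PL Schoenflies) are far heavier than the theorem itself and historically unavailable to Brown. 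Brown's actual conclusion avoids this: he collapses \emph{both} residual sets $C_{\pm}$ (the complements of the open bicollar, not the full closed domains), identifies the quotient with the suspension of $\BS^{2}$, i.e.\ with $\BS^3$, in which the image of $\overline A$ is a closed cone and hence visibly a ball, and then invokes the sharper form of the cellularity lemma --- the quotient map collapsing a cellular set contained in an open set $U$ is a near-homeomorphism realizable by a homeomorphism agreeing with it off $U$ --- applied with $U\subseteq A$ to conclude $\overline A\isom\overline A/C_{-}\isom\D^3$ directly. So the fix is to route the final identification through the near-homeomorphism property of the cellular quotient rather than through the collar. (A smaller caveat: that local flatness yields a global bicollar is itself a separate 1962 theorem of Brown rather than an immediate patching argument, though citing it is harmless.)
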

The local flatness condition here is a weakening of piecewise linearity; see \Sr{loc flat}. Its necessity became clear after Alexander famously introduced his horned sphere, a subset of $\BS^3$  homeomorphic to $\BS^2$, the exterior of which is not simply connected \cite{Alexander}, \cite[p.~169]{Hatcher}.

\medskip
In this paper we prove a 3-dimensional version of Whitney's \Tr{Whitney}, generalising \Tr{GST} (which is used in our proof):

\begin{theorem} \label{Whit3D intro}
Let $X$ be a finite, simply-connected, $2$-complex every link graph of which is 3-connected. Then, for every  two locally flat embeddings $\phi,\psi: X \to \BS^3$, there exists a homeomorphism $\alpha : \BS^3 \to \BS^3$ such that $\psi= \alpha \circ \phi$.
\end{theorem}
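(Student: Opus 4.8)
The plan is to extend the homeomorphism $h := \psi\circ\phi^{-1}\colon \phi(X)\to\psi(X)$ to a homeomorphism $\alpha\colon\BS^3\to\BS^3$; then $\psi=\alpha\circ\phi$ follows at once. Such an extension will exist once I know two things: (a) $h$ carries the connected components of $\BS^3\setminus\phi(X)$ (call them \emph{chambers}) bijectively onto the chambers of $\BS^3\setminus\psi(X)$, taking the boundary of each chamber onto the boundary of its image; and (b) the closure $\overline C$ of every chamber $C$ is a topological $3$-ball, so that a homeomorphism between boundary spheres can be coned inwards. I compare $\phi$ and $\psi$ through the \emph{links} of the vertices: for a vertex $v$, local flatness lets me choose a small sphere $\BS^2_v$ about $\phi(v)$ meeting $\phi(X)$ in an embedded copy of the link graph $L(v)$ (its vertices the edges of $X$ at $v$, its edges the face-corners at $v$). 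Thus $\phi$ and $\psi$ yield two embeddings of the $3$-connected graph $L(v)$ into a $2$-sphere.

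For (a) I apply Whitney's \Tr{Whitney} to each $L(v)$: being $3$-connected, it has the same set of faces in both induced embeddings, so the cyclic order of the faces of $X$ around any edge at $v$ is determined by $X$ up to reversal. Reading this at both ends of an edge (the two readings are mutually reverse) shows the rotation system is common to $\phi$ and $\psi$, since a cyclic order and its reverse have the same consecutive pairs. Hence the local chambers at $v$---the faces of $L(v)$ in $\BS^2_v$, each a disk---are canonically indexed by $X$, and they glue along edges into global chambers independently of the embedding. This gives the desired bijection of chambers, under which $h$ matches boundaries.

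Part (b) is the heart of the matter, and where simple-connectedness is used. Alexander duality in $\BS^3$ gives $\tilde H_i(\BS^3\setminus\phi(X))\cong\tilde H^{2-i}(X)$; as $X$ is connected and simply connected, $\tilde H^0(X)=H^1(X)=0$, so $H_1(C)=H_2(C)=0$ for every chamber $C$. Local flatness makes $\overline C$ a compact $3$-manifold whose boundary is collared, hence homotopy equivalent to $C$, so $H_1(\overline C)=H_2(\overline C)=0$, while $H_3(\overline C)=0$ because $\partial C\neq\emptyset$; thus $\chi(\overline C)=1$. Vanishing of $H_2(\overline C)$ forces $\partial C$ to be connected, and from $\chi(\overline C)=\tfrac12\chi(\partial C)$ we get $\chi(\partial C)=2$, so $\partial C\isom\BS^2$. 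Being locally flat, $\partial C$ then bounds a ball by the Generalised Schoenflies \Tr{GST}, so $\overline C$ is a $3$-ball.

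It remains to assemble $\alpha$. View $\BS^3$ as $\phi(X)$ with the closed chambers attached as $3$-cells along their boundary spheres, and $\BS^3$ likewise as $\psi(X)$ with its chambers attached; by (a) the attaching data correspond under $h$, and by (b) every cell is a ball. Over each chamber I extend the boundary homeomorphism $h|_{\partial C}\colon\partial C\to\partial C'$ to a homeomorphism $\overline C\to\overline{C'}$ by Alexander's coning trick, and glue these extensions to $h$ on $\phi(X)$. The resulting map $\alpha\colon\BS^3\to\BS^3$ is a continuous bijection from a compact space to the Hausdorff space $\BS^3$, hence a homeomorphism. I expect the real difficulty to lie in (b)---pinning down the topology of the chambers from the hypotheses---together with the bookkeeping needed to make the combinatorial matching in (a) compatible with the cell-by-cell extension, especially when a single face of $X$ borders one chamber on both of its sides.
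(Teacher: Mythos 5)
Your overall architecture---recovering a common rotation system via Whitney's theorem on the links, then applying the generalised Schoenflies theorem chamber by chamber and gluing---is the same as the paper's, and your step (a) is essentially the paper's Lemma~\ref{lem: Sigma}. But step (b) contains a genuine gap, and it is precisely the gap that the paper spends Sections~\ref{sec rot sys}--\ref{sec fat} repairing. You assert that ``local flatness makes $\overline C$ a compact $3$-manifold whose boundary is collared''. This is false in general: the boundary of a chamber can fail to be a $2$-manifold, because two sheets of $\partial C$ may meet along an edge or at a vertex of $X$. Concretely, if four faces share an edge $e$ (which is compatible with every link graph being $3$-connected), the four local sectors around $\phi(e)$ are locally separated, but two opposite sectors can belong to the same global chamber $C$; then $\partial C$ contains all four faces near $\phi(e)$ and is, locally, two half-plane-pairs wedged along $\phi(e)$ --- not a surface. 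This is exactly the situation depicted in \fig{fig fat}, and it is why the paper introduces local surfaces and the fattening even under the $3$-connectivity hypothesis. Once $\overline C$ is not a manifold with boundary, everything downstream collapses: there is no collar, no Lefschetz duality, no formula $\chi(\overline C)=\tfrac12\chi(\partial C)$, $\partial C$ need not be a sphere, \Tr{GST} does not apply, and the coning trick has nothing to cone over. Your Alexander-duality computation that $H_1(C)=H_2(C)=0$ for the open chamber is correct and is the same homological input the paper uses in \Lr{lem S2}, but it only pays off after one knows $\partial C$ is a surface.

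The paper's fix is to pass to a fattening ${\rm fat}(X)$: the common rotation system defines Carmesin's local surfaces, which may self-intersect at vertices or edges, and one glues a layer of prism-shaped cells onto each local surface so that in ${\rm fat}(X)$ every local surface is an honest $2$-manifold (\Lr{rot fat}) while ${\rm fat}(X)$ remains Whitney (\Lr{lem: 3-con}). The chamber boundaries of ${\rm fat}(X)$ are then spheres by exactly your homological argument, the Schoenflies-plus-gluing step goes through for ${\rm fat}(X)$ (\Lr{lem: base case}), and restricting the resulting homeomorphism of $\BS^3$ to $X$ finishes the proof. To make your write-up correct you need either such a desingularisation of the chamber boundaries or a direct argument handling non-manifold chamber closures; as written, (b) does not hold. (A smaller point: in your final assembly you should also justify why the set of faces bounding a given chamber is exactly one ``local surface'' rather than a union of several; the paper deduces this from the connectedness of $\partial C$, Proposition~\ref{pC conn}, together with the fact that $\BS^2$ is not a disjoint union of two nonempty surfaces.)
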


Our notion of local flatness for $2$-complexes generalises the standard one for 2-manifolds, and is introduced in \Sr{loc flat}. The reader will lose nothing by replacing it with piecewise linearity. 

In fact we will prove a stronger statement than  \Tr{Whit3D intro}, by relaxing the 3-connectedness requirement on the link graphs to a much weaker one. See \Sr{sec Wh comp} for details. 

Simple connectedness is clearly an indispensable condition in \Tr{Whit3D intro}: if $X$ is a triangulation of a torus, for example, then we can embed it in $\BS^3$ in at least two ways, either knotted or unknotted. This example can be easily modified to make all link graphs 3-connected, by adding 2-cells inside the torus. In \Sr{sec Wh comp} we also provide further examples showing that our main result is tight in the sense that none of its conditions can be dropped.

The finiteness of $X$ is also easily seen to be necessary. For example, let $X$ be the standard Cayley complex of $\Z^3$. Its link graphs are all isomorphic to the 1-skeleton of the octahedron, which is $3$-connected. But there are various ways to embed it into $\BS^3$: we could embed it with a unique accumulation point of images of vertices, or we could embed it inside half of $\BS^3$, with every point of the equator being an accumulation point. Yet, we can allow $X$ to be infinite in  \Tr{Whit3D intro} by imposing additional conditions on the embeddings: 

\begin{theorem} \label{Whit3D inf intro}
Let $X$ be a locally finite, simply-connected, Whitney, $2$-complex, every link graph of which is 3-connected. Let $\phi,\psi: X \to \R^3$  be locally flat, accumulation-free embeddings with bounded chambers. Then there exists a homeomorphism $\alpha : \R^3 \to \R^3$ such that $\psi= \alpha \circ \phi$.
\end{theorem}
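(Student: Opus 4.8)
The plan is to reduce \Tr{Whit3D inf intro} to a chamber-by-chamber extension of the homeomorphism $\tau := \psi \circ \phi^{-1} \colon \phi(X) \to \psi(X)$, which is handed to us as a homeomorphism between the two images and which we must extend to an ambient homeomorphism $\alpha$ of $\R^3$. Write $\R^3 \setminus \phi(X)$ as the disjoint union of its connected components; by the bounded-chambers hypothesis each such \emph{chamber} is bounded, and by accumulation-freeness together with local finiteness of $X$ the family of closures of chambers is a locally finite closed cover of $\R^3$, meaning every compact set meets only finitely many of them. The same holds for $\psi$. If I can (a) produce a bijection between the chambers of $\phi$ and those of $\psi$ under which corresponding chambers have boundaries matched by $\tau$, and (b) extend $\tau$ across each pair of corresponding closed chambers, then the pieces automatically agree on the shared $2$-cells of $\phi(X)$ (where they all equal the fixed map $\tau$), so they glue to a well-defined bijection $\alpha$ of $\R^3$. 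The pasting lemma applied to the locally finite closed cover $\{\overline C\}$, and symmetrically to $\{\overline D\}$ for the inverse, then makes both $\alpha$ and $\alpha^{-1}$ continuous, so $\alpha$ is the desired homeomorphism.

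For step (a), the key point is that the chamber structure is determined combinatorially by $X$ and is therefore realised identically by both embeddings. Around each vertex $v$ the local chambers correspond to the faces of an embedding of the link graph of $v$ into $\BS^2$, and since this link graph is $3$-connected, Whitney's \Tr{Whitney} guarantees that this set of faces — hence the local incidence data — is independent of the embedding. Gluing these local pictures across edges and $2$-cells yields an abstract chamber complex of $X$ that both $\phi$ and $\psi$ must realise. I would then exhaust $X$ by finite subcomplexes $X_1 \subseteq X_2 \subseteq \cdots$ with $\bigcup_n X_n = X$ and use the combinatorial analysis underlying \Tr{Whit3D intro} on these finite pieces to promote the local agreement to a genuine global bijection of chambers whose boundaries are matched by $\tau$. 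Simple connectedness of $X$ is what forces each chamber to be a topological ball rather than carrying extra topology.

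For step (b), once a chamber $C$ of $\phi$ and its partner $D$ of $\psi$ are identified, I would argue that each of $\overline C, \overline D$ is a closed $3$-ball: its boundary is a compact, locally flat $2$-manifold which (by simple connectedness of $X$ and the local structure above) is simply connected, hence a $2$-sphere, so the generalised Schoenflies \Tr{GST} shows $\overline C$ and $\overline D$ are balls. The boundary homeomorphism $\tau|_{\partial C} \colon \partial C \to \partial D$ between these spheres then extends to a homeomorphism $\overline C \to \overline D$ by coning (the Alexander trick), and the general fact that a continuous bijection from a compact space to a Hausdorff space is a homeomorphism confirms that each coned map really is a homeomorphism onto its image. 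Because every such extension restricts to the prescribed $\tau$ on each boundary $2$-cell, consistency of the extensions on faces shared by adjacent chambers is automatic.

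The main obstacle is step (a): making precise that the decomposition of $\R^3$ into chambers is intrinsic to $X$ and identically realised by $\phi$ and $\psi$, and in particular that corresponding chambers have boundary $2$-spheres. This is where the hypotheses genuinely interact — $3$-connected links pin down the local rotation structure via \Tr{Whitney}, simple connectedness rules out knotting or handles in the chambers, and the finite theorem \Tr{Whit3D intro} supplies global consistency along the exhaustion. The passage to the infinite complex also needs care, since an honest limit of homeomorphisms of $\R^3$ need not be a homeomorphism; this is precisely why I would avoid taking such a limit directly and instead assemble $\alpha$ in one stroke from the chamber data, leaning on bounded chambers and local finiteness to control continuity of $\alpha$ and $\alpha^{-1}$.
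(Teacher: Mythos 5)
Your overall architecture (match chambers combinatorially, show each closed chamber is a ball, extend $\psi\circ\phi^{-1}$ chamber by chamber via Schoenflies, glue over a locally finite closed cover) is the same as the paper's, and the infinite-case bookkeeping via bounded chambers and accumulation-freeness is handled correctly. But there is one genuine gap in step (b): you assert that the boundary of a chamber is "a compact, locally flat $2$-manifold", and this is false in general. The boundary $\partial C$ is a union of closed $2$-cells of $\phi(X)$ (\Prr{chambers}), but the corresponding local surface may pass through an edge or a vertex of $X$ more than once, so $\partial C$ can be pinched there and fail to be locally Euclidean. A concrete instance from the paper: take two tetrahedra in $\R^3$ glued along a common edge $e$; the boundary of the outer chamber contains all eight triangles and is not a manifold along $e$ (nor at the endpoints of $e$, where the relevant local surface has two distinct local disks). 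Note that $3$-connectedness of the link graphs does not exclude this. Once $\partial C$ is not a sphere, both your appeal to the generalised Schoenflies theorem (to conclude $\overline C\cong B^3$) and the Alexander-trick coning of $\tau|_{\partial C}$ break down. The paper's fix is an extra construction you have no analogue of: it replaces $X$ by a fattening ${\rm fat}(X,\sigma)$, obtained by gluing a layer of prism-shaped cells onto each local surface, extends both embeddings to ${\rm fat}(X)$, and proves that every local surface of ${\rm fat}(X)$ has exactly one local disk at each vertex and hence genuinely is a $2$-sphere; only then does the chamber-by-chamber Schoenflies argument go through, and the resulting ambient homeomorphism restricts to the desired one on $X$.

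A smaller soft spot in step (a): Whitney's \Tr{Whitney} pins down the embedding of each link graph only up to reflection, so "the local incidence data is independent of the embedding" needs the sign choices at different vertices to be propagated consistently. The paper does this (Lemma~\ref{lem: Sigma}) by walking along edges that lie in at least three faces, where the cyclic order is not self-inverse; in your setting every edge qualifies because the links are $3$-connected, but the propagation step should be said explicitly rather than absorbed into "gluing these local pictures". Your instinct to avoid taking a limit of homeomorphisms along the exhaustion and instead assemble $\alpha$ in one stroke is the right one and matches the paper.
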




\medskip

Whitney's \Tr{Whitney} has many applications, one of which is understanding group actions on $\BS^2$ or other 2-manifolds. For example, it is known that a finite group $\Gamma$ admits a faithful action by homeomorphisms on $\BS^2$ \iff\ $\Gamma$ has a planar \Cg. To prove the backward direction of this statement, one can use \Tr{Whitney} to extend the canonical action of $\Gamma$ on a \Cg\ embedded in $\BS^2$ to an action on $\BS^2$. See \cite{Kleinian} for more. This paper was motivated by the quest (currently in progress) to extend such results to groups acting on $\BS^3$ or other 3-manifolds. In parallel work in progress, George Kontogeorgiou (private communication) has used \Tr{Whit3D intro} to deduce that if a finite group $\Gamma$ has a Cayley complex embeddable in $\BS^3$, then $\Gamma$ admits a faithful action by isometries on $\BS^3$.

In a recent series of papers, Carmesin \cite{CarEmbItoV} obtained a characterization of the 2-complexes admitting an embedding in $\BS^3$ in terms of forbidden minors, in the spirit of Kuratowski's characterization of the planar graphs. Carmesin's work provided additional motivation, and tools, for the current work. More generally, there is currently an interest for topological extensions of graph-theoretic results to 2-complexes, see e.g.\ \cite{CarMihOut,spheres,KLNSUni}, and this paper adds to this trend.

\medskip

Many of the concepts and results of this paper can be easily extended to higher dimensions, but for others, in particular the results of \Sr{sec chambers}, this seems more difficult. We would be interested to see extensions of \Tr{Whit3D intro} to higher dimensions:
\begin{problem}
Let $X$ be a finite, $n$-dimensional cell complex. Under what conditions is it true that for every  two locally flat embeddings $\phi,\psi: X \to \BS^{n+1}$, there exists a homeomorphism $\alpha : \BS^{n+1} \to \BS^{n+1}$ such that $\psi= \alpha \circ \phi$?
\end{problem}
For example, it is natural to assume that $X$ is $(n-1)$-connected, and its links satisfy a recursive condition ensuring their unique embeddability in $\BS^{n}$.

\medskip
The proof of \Tr{Whit3D intro} can be summarized as follows. The images $\phi(X), \psi(X)$ of $X$ separate $\BS^3$ into chambers, and we want to apply \Tr{GST} on each chamber of $\phi$ to map it to a chamber of $\psi$, after which we can combine these maps with $\psi^{-1} \circ \phi: X \to X$ to obtain $\alpha$. In order to carry out this plan, we need that A) each chamber is bounded by a copy of $\BS^2$, and B) the subcomplexes of $X$ bounding chambers of $\phi$ or $\psi$  coincide. To check B), we show that  $\phi, \psi$ give rise to the same rotation system on $X$, i.e.\ the same cyclic ordering of 2-cells around each 1-cell of $X$ (\Sr{sec Urs}). Rotation systems  give rise to \defi{local surfaces}, introduced by Carmesin \cite{CarEmbItoV} as a combinatorial description of boundaries of chambers (\Sr{sec rot sys}), which are helpful in establishing (B). It is possible however that (A) is violated, e.g.\ because the boundary of some chamber contains two copies of $\R^2$ intersecting at a point or line segment. To amend this issue, we fatten $X$ into a larger complex ${\rm fat}(X)$, and show that ${\rm fat}(X)$ satisfies (A) (\Sr{sec fat}). \Sr{sec proof} gives a more accurate implementation of this plan.

\section{Preliminaries}

\subsection{Graphs} \label{graphs}

A (simple) \defi{graph} \g is a pair $(V,E)$ of sets, where $V$ is called the set of \defi{vertex} and $E$ is a set of two-element subsets of $V$, called the set of \defi{edges}. We will write $uv$ instead of $\{u,v\}$ to denote an edge. A \defi{multi-graph} is defined similarly, except $E$ is a multi-set, and it can have elements consisting of just one vertex. 

We let $V(G)$ denote the set of vertices of a graph $G$ and $E(G)$ denote the set of edges of $G$.
 
The \defi{degree} of a vertex $v\in V(G)$ in \g is the number of edges in $E(G)$ containing $v$.

Every (multi-)graph $\g=(V,E)$ gives rise to an 1-complex, by letting $V$ be the set of 0-cells, and for each $uv\in E$ introducing an arc with its endpoints identified with $u$ and $v$. Thus we will sometimes interpret the word \defi{graph} as  an 1-complex. In particular, when discussing \defi{embeddings} of graphs in $\BS^2$ we will mean topological embeddings of 1-complexes.

A \defi{subdivision} of $\g=(V,E)$ is obtained by replacing an edge $uv$ by a pair of edges $uw,vw$, where $w\not\in V$ is a new vertex. Note that subdivisions results in homeomorphic 1-complexes.

\subsection{2-complexes} \label{ccs}

A \emph{$2$-complex} is a topological space $X$ obtained as follows. We start with an 1-complex $X^1$ as defined above, called the \defi{1-skeleton} of $X$. We then introduce a set $X^2$ of copies of the closed unit disc $\mathbb{D} \subseteq \R^2$,  called the \defi{2-cells} or \defi{faces} of $X$, and for each $f\in X^2$ we \defi{attach} $f$ to $X^1$ via a map $\phi_f: \BS^1 \to X^1$, where we think of $\BS^1$ as the boundary of $\mathbb{D}$. Attaching here means that we consider the quotient space where each point $x$ of $\BS^1\subset f$ is identified with $\phi_f(x)$. We let $X^0:= V(X^1)$ be the set of \defi{vertices}, or 0-cells, of $X$.

We say that $X$ is \defi{regular}, if $\phi_f$ is a homeomorphism onto its image \fe\ $f\in X^2$.

For $f\in X^2$, we write $f =[x_1,\dots, x_k]$ if $x_1,\dots, x_k$ is the cyclic sequence of vertices appearing in the image of $\phi_f$. We only use this notation for regular $X$. We say that $u,v,w$ are \emph{consecutive} vertices in $f$, if $u,v,w$ is a consecutive subsequence of a sequence  $x_1,\dots, x_k$ as above.

To each face $f=[x_1,\dots, x_k] \in X^2$, we associate two distinct \defi{directed faces} 
$f_1,f_2$, also denoted by $\langle x_1,\dots, x_t\rangle$ and $\langle x_t,\dots, x_1\rangle$ respectively. Their \defi{reverses} are defined as $f_1^{-1}:= f_2$ and $f_2^{-1}:= f_1$.




We sometimes abuse the notion and consider faces or edges as sets and write $f\cap g$ to denote the set of vertices belonging to both faces. If the set of vertices belonging to both faces forms either a vertex or an edge, we sometimes write $f\cap g$ to denote the vertex or the edge. For example, if $f=[x_1,x_2,x_3]$ and $g=[x_2,x_3,y]$, then $f\cap g = x_2x_3 \in X^1$.

\smallskip
If $X$ is not regular then it is always possible to produce a regular complex $X'$ homeomorphic to $X$ using the \emph{barycentric subdivision} defined as follows. 
For each edge $e=uv\in X^1$, we subdivide $e$ by adding a new vertex $m$ at its midpoint. For each occurrence of $e$ in a 2-cell $f$ of $X$, we replace that occurence by the pair $um, mv$ or $vm, mu$ as appropriate. We then triangulate each face $h=x_1,\ldots,x_k$ of the resulting $2$-complex by adding a new vertex $c$ in its interior, adding the edges $c x_1,\ldots, cx_k$ to the 1-skeleton, and replacing $h$ by the 2-cells $[c, x_1, x_2], [c, x_2, x_3], \ldots, [c,x_k, x_1]$. 

Note that the barycentric subdivision $X'$ of $X$ is a simplicial complex, in particular a regular one, and its 1-skeleton is a simple graph. 

\smallskip
For each $v\in X^0$, the \emph{link graph} $L_X(v)$ is the graph on the neighbourhood of $v$ in $X^1$ with $uw \in E(L_X(v))$ if and only  if $u,v,w$ are consecutive in a face of $X$. Alternatively, we can define $L_X(v)$ so that its vertices are the edges incident with $v$, and two edges $vu,vw$ are joined by an edge of $L_X(v)$ whenever $u,v,w$ are consecutive in a face of $X$. These two definitions yield isomorphic graphs when $X^1$ is a simple graph, and it is a matter of convenience to use the one or the other. If $X$ is not regular then link graphs are more naturally defined as multigraphs, but we will restrict their use to regular complexes, with simple 1-skeleton, in this paper.

We say that $X$ is \emph{locally-$k$-connected}, if each of its link graphs is $k$-connected. Here a graph is \defi{$k$-connected}, if it has at least $k$ vertices, and remains connected after removing any set of at most $k-1$ vertices.

\smallskip
A 2-complex $X$ is \defi{locally finite}, if each vertex of $X$ is contained in finitely many edges and faces. {\bf All 2-complexes in this paper are locally finite.}



\subsection{Chambers}
For an embedding $\phi:X\rightarrow \BS^3$ of a $2$-complex $X$, we call each connected component of $\BS^3 \setminus \phi(X)$  a \emph{$\phi$-chamber} or simply a \emph{chamber}. The \defi{boundary} $\partial C$ of a chamber $C$ is the set of points $x \in \phi(X)$ in the closure of $C$ that are not in $C$.

It follows from elementary topological arguments that, when $X$ is finite, every chamber $C$ is open, and if $\partial C$ intersects the interior of a face $f$, then $f\subseteq C$. We deduce
\begin{proposition} \label{chambers}
Let $\phi:X\rightarrow \BS^3$ be an embedding of a finite, locally 1-connected, $2$-complex $X$.  Then $\partial C$ is a union of faces of $X$ \fe\ $\phi$-chamber $C$.
\end{proposition}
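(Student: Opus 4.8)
The plan is to combine the two facts quoted just before the statement with a local analysis of $\phi(X)$ along the $1$-skeleton. Throughout, I identify a small ball around any point of $\phi(X)$ with a ball in $\R^3$ via a chart, which is legitimate since $\BS^3$ is a manifold; I also record that $\partial C$ is closed (as $C$ is open and $\partial C = \overline{C}\setminus C$) and that $\partial C \subseteq \phi(X)$.

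First I would pin down the behaviour at interior points of faces, which is the ``elementary'' part. Fix a face $f$ and a point $p=\phi(q)$ with $q$ in the interior $\mathring f$. Since $\phi$ is injective and $X$ is compact (hence $\phi(X)$ closed), in a small enough ball $B$ around $p$ the set $\phi(X)$ agrees with the $2$-manifold patch $\phi(\mathring f)$, so $B\setminus\phi(X)$ has exactly two components, the two local sides of the patch. A connectedness argument then shows that the set of points of $\phi(\mathring f)$ lying in $\partial C$ is both open and closed in $\phi(\mathring f)$: open because the side of the patch occupied by $C$ is locally constant, and closed because $\partial C$ is closed. As $\phi(\mathring f)$ is connected, either $\phi(\mathring f)\subseteq\partial C$ or $\phi(\mathring f)\cap\partial C=\emptyset$; and in the former case $\phi(f)=\overline{\phi(\mathring f)}\subseteq\overline{\partial C}=\partial C$. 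This is exactly the fact quoted before the statement, and it already shows that $U:=\bigcup\{\phi(f): \phi(\mathring f)\subseteq\partial C\}$ is a union of faces contained in $\partial C$.

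It remains to prove $\partial C\subseteq U$, i.e.\ that every $x\in\partial C$ lying on the $1$-skeleton is covered by a face of $U$; this is where I would use local finiteness and local $1$-connectedness. If $x=\phi(q)$ with $q$ in the interior of an edge $e$, then in a small ball $\phi(X)$ looks like the line $\phi(e)$ together with the finitely many half-disc ``pages'' $\phi(\mathring f_i)$ of the faces $f_i\supseteq e$, and the local complement is the union of the open dihedral wedges cut out by these pages. Since $x\in\partial C$ and $C$ is open, some wedge $W$ lies in $C$; as $\phi(e)$ lies on the boundary of every wedge and each wedge is flanked by the interior of at least one page, we get $\phi(\mathring f_i)\cap\partial C\neq\emptyset$ for some $i$, whence $x\in\phi(e)\subseteq\phi(f_i)\subseteq U$ by the previous paragraph. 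If instead $x=\phi(v)$ is a vertex, the same analysis takes place on a small sphere $S$ around $x$, on which $\phi(X)$ restricts to an embedded copy of the link graph $L_X(v)$: the wedges are replaced by the components of $S\setminus\phi(L_X(v))$, a component lying in $C$ has $x$ on its closure, and since $L_X(v)$ is connected with no isolated vertex (local $1$-connectedness), the boundary of that component contains an edge of $L_X(v)$, i.e.\ a corner at $v$ of some face $f$; again $\phi(\mathring f)\cap\partial C\neq\emptyset$ and $x\in\phi(f)\subseteq U$.

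The main obstacle, and the only place where more than elementary topology is used, is this last step: ruling out that a point of $\partial C$ sits on an edge or vertex bordering no face of $\partial C$. It is precisely local $1$-connectedness that forbids the offending configurations, since an edge or vertex whose link contributes an isolated vertex would yield a complementary region bounded by a stray $1$-cell rather than by a face. The only genuinely exceptional cases are isolated-edge or isolated-vertex components of $X$ (whose link is a single vertex), which I would either dispose of by hand or exclude at the outset. Granting this, $\partial C=U$ is a union of faces, as claimed.
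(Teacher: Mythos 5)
Your argument is correct and is essentially the paper's own, except that the paper offers almost no proof: it merely asserts as ``elementary topological arguments'' that chambers are open and that $\partial C$ contains all of $\phi(f)$ once it meets ${\rm int}(\phi(f))$, and then says ``we deduce''; your open--closed argument on face interiors, and the local analysis forcing every boundary point on the $1$-skeleton to lie on a face already known to be in $\partial C$, supply exactly the details the paper omits. Two remarks: first, your caveat about isolated-edge components is a genuine degenerate exception to the statement as literally written (a $K_2$ component has single-vertex, hence $1$-connected, links, yet its image lies in a chamber boundary without being covered by any face), so excluding it is the right call; second, your ``dihedral wedge'' and ``small sphere'' pictures tacitly presuppose a tame local structure that the proposition does not grant, though the only fact you actually extract from them --- that a local complementary component contained in $C$ must have a face-interior point on its boundary --- survives for arbitrary embeddings, since removing only the $1$-skeleton from a small ball cannot disconnect it, so any such component is a proper open subset of that connected set and its frontier must meet some $\phi({\rm int}(f))$.
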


An embedding $\phi: X \to \R^3$ is \defi{accumulation-free}, if $\phi(X^0)$ has no accumulation points in $\R^3$. It can be proved by straightforward topological arguments that, under mild conditions, the boundary of each chamber is connected. We provide a proof for completeness.

\begin{proposition} \label{pC conn}
Let $\phi:X\rightarrow M$ be either an embedding of a finite $2$-complex $X$ in $M= \BS^3$, or an accumulation-free embedding of a locally finite $2$-complex $X$ in $M= \R^3$. \mymargin{this is new} Then $\partial C$ is connected. 
\end{proposition}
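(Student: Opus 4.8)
The plan is to reduce the statement to the \emph{unicoherence} of $M$, that is, to the classical fact that if a space $M$ is written as $M = A \cup B$ with $A,B$ closed and connected, then $A\cap B$ is connected. Both $\BS^3$ and $\R^3$ have this property (it holds for $\BS^n$ and $\R^n$ with $n\ge 2$, since their first cohomology vanishes). I would apply it to the decomposition $M = \overline{C} \cup (M\setminus C)$. Since $C$ is open---being a connected component of the open set $M\setminus \phi(X)$, using that $M$ is locally connected and that $\phi(X)$ is closed in $M$---the set $M\setminus C$ is closed, and $\overline{C}\cap (M\setminus C) = \overline{C}\setminus C = \partial C$. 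As $\overline{C}$ is the closure of a connected set, it is connected. Hence, once we know that $M\setminus C$ is connected, unicoherence yields at once that $\partial C$ is connected.

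The heart of the argument is therefore to show that $M\setminus C$ is connected, and this is where I would use that $\phi(X)$ is connected (equivalently, that $X$ is connected, which is genuinely needed here). First I would record two elementary facts about an arbitrary $\phi$-chamber $C'$: its topological boundary satisfies $\partial C' \subseteq \phi(X)$ (a boundary point lying in the open set $M\setminus\phi(X)$ would lie in some chamber $C''$, and then the open set $C''$ would meet $\overline{C'}$ and hence meet $C'$, impossible for distinct components), and $\partial C' \neq \emptyset$ (since $M$ is connected and $\phi(X)\neq\emptyset$, we have $C'\neq M$). For the case $M=\R^3$ one must also check that $\phi(X)$ is closed: by local finiteness of $X$ together with accumulation-freeness, every point of $\R^3$ has a neighbourhood meeting only finitely many of the compact cells of $\phi(X)$, so $\phi(X)$ is closed; in the case $M=\BS^3$ the set $\phi(X)$ is compact and this is immediate. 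Now write $M\setminus C = \phi(X)\cup\bigcup_{C'\neq C} C'$, the union ranging over all chambers other than $C$. Replacing each $C'$ by its closure only adds points of $\partial C'\subseteq \phi(X)$, so $M\setminus C = \phi(X)\cup\bigcup_{C'\neq C}\overline{C'}$. By the two facts, each connected set $\overline{C'}$ meets the connected set $\phi(X)$, whence each $\phi(X)\cup \overline{C'}$ is connected; as all of these contain $\phi(X)$, their union $M\setminus C$ is connected.

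The step I expect to be most delicate is not the unicoherence itself but this reduction, namely verifying connectedness of $M\setminus C$ in the presence of possibly many---and, in $\R^3$, possibly infinitely many---other chambers. The argument above is insensitive to the number of chambers and to whether they are bounded, precisely because an arbitrary union of connected sets sharing a common connected subset is connected; but it is indispensable that $\phi(X)$ be connected, as the example of two disjoint (say, nested) spheres shows that the conclusion fails for disconnected $X$. As an alternative to invoking unicoherence as a black box, one could run the reduced Mayer--Vietoris sequence for the closed cover $\{\overline{C},\, M\setminus C\}$: from the portion $\tilde H_1(M)\to \tilde H_0(\partial C)\to \tilde H_0(\overline{C})\oplus \tilde H_0(M\setminus C)\to \tilde H_0(M)$, the vanishing of the outer terms (as $M$ is connected with $\tilde H_1=0$) and of the two flanking terms (as $\overline{C}$ and $M\setminus C$ are connected) forces $\tilde H_0(\partial C)=0$, i.e.\ $\partial C$ is connected; this needs only that the cover be excisive, which holds since $\phi(X)$ is the image of a subcomplex.
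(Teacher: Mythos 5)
Your argument is correct, but it takes a genuinely different route from the paper's. The paper argues by contradiction: it isolates a component $K$ of $\partial C$ inside an open set $U$ whose closure avoids $\partial C\setminus K$, observes that both $C$ and its complement connect $K$ to $\partial C\setminus K$ and hence must meet $\partial U$, and concludes that the (connected) $\partial U$ must cross $\partial C$ --- contradicting the choice of $U$. You instead apply unicoherence of $M$ to the closed decomposition $M=\overline{C}\cup(M\setminus C)$, and put the real work into proving that $M\setminus C$ is connected by writing it as $\phi(X)\cup\bigcup_{C'\neq C}\overline{C'}$, each $\overline{C'}$ being a connected set meeting the connected set $\phi(X)$. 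The two arguments rest on the same two pillars --- connectedness of $M\setminus C$ and the vanishing of $H^1(M)$ --- but the paper merely asserts the former (``$Z=C^c$ is a connected set'') and leaves implicit the choice of a $U$ with connected boundary, whereas you prove the former in full and package the latter as the classical unicoherence of $\BS^n$ and $\R^n$ for $n\ge 2$; your version is arguably the more airtight of the two. You are also right to flag that connectedness of $X$ is a necessary (and here unstated) hypothesis --- two nested spheres give a chamber with disconnected boundary --- and the paper's proof uses it implicitly at exactly the same point. One caution on your Mayer--Vietoris fallback: the closed cover $\{\overline{C},\,M\setminus C\}$ need not be excisive for singular homology, since the interiors need not cover $M$ along $\partial C$; if you wish to avoid citing unicoherence as a black box, use \v{C}ech--Alexander--Spanier cohomology or Eilenberg's characterization of unicoherence via maps to $\BS^1$, neither of which requires any niceness of the pieces. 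This does not affect your primary argument.
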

\begin{proof}
Suppose $S:=\partial C$ is disconnected, and let $K\subset S$ be one of its components. Let $U\supset K$ be an open subset of $\BS^3$ containing $K$, the closure $\overline{U}$ of which avoids $S \sm K$; our conditions on $\phi$ easily imply the existence of such a $U$. Note that $\partial U$ disconnects $U$ from its complement $U^c$ by elementary topological arguments, hence $\partial U$ disconnects $K$ from $S \sm K$. Since $C$ connects $K$ to  $S \sm K$, this implies that $\partial U\cap C\neq \emptyset$. { Since $Z=C^c$ is a connected set containing both $K$ and $S\sm K$, this implies that $\partial U\cap C^c \neq \emptyset$.} Since $\partial C$ disconnects  $C$ from $C^c$, and $\partial U$ is connected, we deduce that $\partial U\cap \partial C\neq \emptyset$. This contradicts the choice of $U$.
\end{proof}

\subsection{Local flatness} \label{loc flat}

Let us recall the standard notion of local flatness for 2-manifolds. 
An embedding $\phi: \BS^2 \to \BS^3$ is \defi{locally flat}, if  for each $x\in \phi(\BS^2)$ there exists a neighbourhood $U_x$ of $x$ \st\ the topological pair $(U_x,U_x\cap \phi(X))$ is homeomorphic to $(\mathbb{R}^3, \mathbb{R}^2)$, by which we mean that there is a homeomorphism from $U_x$ to $\mathbb{R}^3$ mapping $U_x\cap \phi(X)$ to $\mathbb{R}^2 \subset \mathbb{R}^3$. (A \defi{topological pair} $(X,A)$ consists of a topological space $X$ and a subspace $A\subseteq X$.)

We now extend the notion of local flatness to embeddings of $2$-complexes instead of $2$-manifolds. Before doing so we need to introduce some notation.
For a given planar multigraph $G$, consider a planar embedding $\psi$ of $G$ into the boundary of an open unit ball $B_1$ around the origin in $\mathbb{R}^3$. Let $\sh(G)$  
 be the collection of points $y$ lying on a straight line segment between the origin and some point $x\in \psi(G)$, where the origin is included but $x$ is not. 
 
For a face $f$ of $X$, we write ${\rm int}(\phi(f))$ to denote the \defi{interior} of $\phi(f)$, i.e.\ the set of points of $\phi(f)$ which do not belong to $\phi(e)$ for any $e\in X^1$. Here, we consider the face as a topological disk, and any edge as a topological curve.
Similarly, for $e=uv\in X^1$, we write ${\rm int}(\phi(e))$ to denote the \defi{interior} of $\phi(E)$, i.e.\ the set of points of $\phi(e)$ other than $\phi(v)$ and $\phi(u)$.

We say that an embedding $\phi$ of a 2-complex $X$ in  $\R^3$ or $\BS^3$ is \defi{locally flat}, if  for each $x\in \phi(X)$ there exists a neighbourhood $U_x$ of $x$ satisfying the following:
\begin{enumerate}
	\item[(L1)] if $x \in {\rm int}(\phi(f))$ for some face $f\in X^2$, then $(U_x,U_x\cap \phi(X))$ is homeomorphic to $(\mathbb{R}^3, \mathbb{R}^2)$.
	\item[(L2)] If $x \in {\rm int}(\phi(e))$ for some edge $e \in X^1$, then $(U_x,U_x\cap \phi(X))$ is homeomorphic to $(B_1,\sh(G))$ where $G$ is a multigraph on two vertices with a number of edges equal to the number of 2-cells incident with $e$. \mymargin{Replaced "at least one edge" with "with a number of edges equal to the number of 2-cells incident with $e$"; right?}
	\item[(L3)] If $x = \phi(v)$ for some $v\in X^0$, then  $(U_x,U_x\cap \phi(X))$ is homeomorphic to $(B_1,\sh(G))$ where $G=L_X(v)$ is the link graph of $v$.
\end{enumerate}

\section{Definition of Whitney complexes and statement of the main result} \label{sec Wh comp}

In \Tr{Whit3D intro}  we assumed $X$ to be locally 3-connected. In this section we will re-state our main theorem in a way that significantly relaxes this condition. Before doing so, let us go one dimension down, and revisit planar graphs. 
Whitney's \Tr{Whitney} assumes the graph to be 3-connected, but this condition can be relaxed to yield the following characterisation of graphs with essentially unique planar embeddings

\begin{theorem}[Fleischner \cite{FleUni}] \label{un pl gr}
A connected graph $G$ has a unique, up to automorphism of $\BS^2$,  embedding in $\BS^2$, if and only if $G$ is a subdivision of either\\ 
A) a $3$-connected planar simple graph, or\\ 
B) a connected multigraph with at most three edges. 
\end{theorem}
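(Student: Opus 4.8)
The plan is to convert this topological statement into a purely combinatorial one about rotation systems, and then to read the characterisation off from the structure theory of $2$- and $3$-connected graphs. The conceptual heart is a bridge lemma: for a connected graph $G$, two embeddings $\phi,\psi\colon G\to\BS^2$ satisfy $\psi=\alpha\circ\phi$ for some homeomorphism $\alpha$ of $\BS^2$ if and only if they induce the same rotation system (cyclic ordering of the edge-ends at each vertex) up to a single global reversal. The forward implication is immediate: a homeomorphism of $\BS^2$ is orientation preserving or reversing, so it preserves or reverses all the local cyclic orders simultaneously. For the converse I would use that an embedding of a connected graph in $\BS^2$ is automatically a $2$-cell embedding, so that every face is an open disc whose boundary walk is dictated by the rotation system; given two embeddings with the same rotation system, $\psi\circ\phi^{-1}$ carries each face boundary of $\phi$ homeomorphically to the corresponding face boundary of $\psi$, and I extend it across each closed face-disc by the Schoenflies/Alexander trick and glue to obtain $\alpha$. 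Since subdividing an edge changes neither the homeomorphism type of the $1$-complex nor (after ignoring the trivial rotation at a degree-$2$ vertex) the rotation system, the whole question is invariant under subdivision, so I may pass to the multigraph $\hat G$ obtained from $G$ by suppressing all degree-$2$ vertices. It then suffices to prove that $\hat G$ is uniquely embeddable iff $\hat G$ is a $3$-connected simple graph or has at most three edges.

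The ``if'' direction now splits cleanly. If $\hat G$ is $3$-connected and simple, then Whitney's Theorem~\ref{Whitney} directly gives that any two embeddings differ by a homeomorphism of $\BS^2$. If $\hat G$ has at most three edges, then it is one of finitely many small multigraphs (an arc, a cycle, a path, the claw $K_{1,3}$, the theta graph $D_3$, a cycle with a pendant arc, and so on); in each of these every vertex has degree at most $3$, so the cyclic order of edge-ends at each vertex is forced up to reflection, and a short direct check — most transparently via the face-degree sequence read off from the rotation system — shows the rotation system is unique up to reversal, hence the embedding is unique.

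The substance is the ``only if'' direction: assuming $\hat G$ is neither $3$-connected simple nor of size at most three, I must exhibit two embeddings with genuinely different rotation systems (which, by the bridge lemma, are then inequivalent). I would run this through the small separators of $\hat G$. If $\hat G$ has a cut vertex, I re-embed by permuting the cyclic arrangement of the branches meeting there, or by flipping one branch; if $\hat G$ is $2$-connected but not $3$-connected it has a $2$-separator $\{u,v\}$, across which a Whitney twist reverses one side and alters the rotation system; and if $\hat G$ is $3$-connected but carries a pair of parallel edges, then reordering that parallel class against the remaining edges at its endpoints yields a new rotation system. In each case I must verify that the move genuinely changes the rotation system up to reversal, and identify exactly when it fails to.

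Here I expect the main difficulty, and it is cleanest to view all three moves through a single principle: the cyclic order of $k$ labelled objects (parallel edges of a bond, legs at a cut vertex, or attachment points of a $2$-separation) is unique up to reflection precisely when $k\le 3$, the number of such orders being $1$ for $k\le 3$ but exceeding $1$ for $k\ge 4$. This is exactly what singles out ``three'': the bond $D_3$ and the claw admit no nontrivial reordering, whereas $D_4$ and $K_{1,4}$ already do. The real work is to show that whenever \emph{all} of the available re-embedding moves act trivially — no permutable bond of size $\ge 4$, no flippable branch, no reversible side of a $2$-separator — the graph $\hat G$ is forced either to be $3$-connected and simple or to collapse into this exceptional family of multigraphs with at most three edges. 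Making that delineation airtight is the crux, and I would organise it around the decomposition of $\hat G$ into its $3$-connected components, checking that a trivial decomposition (a single $3$-connected simple block) and the degenerate leaves (the bonds $D_1,D_2,D_3$ and their small relatives) are the only possibilities consistent with unique embeddability.
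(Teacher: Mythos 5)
Your overall strategy coincides with the paper's: both reduce to the multigraph $\hat G$ (the paper's $G'$) obtained by suppressing degree-$2$ vertices, dispatch the $3$-connected case with Whitney's theorem and the small multigraphs by inspection, and attack the converse by exhibiting re-embedding moves at small separators. Your rotation-system ``bridge lemma'' is a legitimate way to make ``essentially unique embedding'' combinatorial, and the paper implicitly relies on the same equivalence.

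The gap is that you stop exactly where the proof actually lives. You correctly identify the crux --- showing that when no reordering, flip, or twist yields a new embedding, $\hat G$ must fall into the two families --- but you only announce a plan for it rather than carrying it out. Moreover, your proposed organising principle, that a cyclic order of $k$ objects is unique up to reflection iff $k\le 3$, governs only the \emph{reordering} moves; it says nothing about when a \emph{flip} of a single branch at a cut vertex, or of one side of a $2$-separation, is trivial, and that is precisely where the exceptional graphs $K_{1,3}$ and $C_2 \vee K_2$ arise (each has a cut vertex of degree $3$, so no reordering is available, and they survive only because every flippable piece is a single edge or a $2$-cycle, hence admits a reflection fixing its attachment vertices). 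The paper executes this delineation case by case: (i) two vertices joined by at least $4$ parallel edges can be reordered; (ii) if a cut vertex $v$ lies on a cycle, flipping a component of $G'-v$ is nontrivial unless the whole graph is $C_2\vee K_2$, using that any piece which is not a single edge contains a vertex of degree at least $3$ other than $v$; (iii) a cut vertex in an acyclic $G'$ forces $K_{1,3}$ via the degree bound; and (iv) if $G'$ is $2$-connected with a $2$-cut $\{u,v\}$, then since every vertex of $G'$ has degree at least $3$ both sides carry an edge off the cycle $P\cup Q$, so reflecting one side along $P$ gives a genuinely different embedding and this case is impossible. Until you carry out an analysis of this kind --- in particular, characterise exactly when a flip is trivial --- the ``only if'' direction is not proved.
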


We remark that \Tr{un pl gr} is not needed for the proof of our main result (\Tr{Whit3D} below), it is provided to motivate the statement of the later. The multigraphs that item (B) allows are the following: an isolated vetex, an edge with multiplicity one, two or three, $K_{1,3}$,  and $C_2 \vee K_2$, i.e.\ a path of length two with one edge having multiplicity two.

A \defi{theta graph} is a subdivision of the multi-graph with 2 vertices and 3 edges between them. 

\medskip
We now return to embeddings of $2$-complexes into $\BS^3$. Assuming  3-connectedness of link graphs in \Tr{Whit3D intro} is a natural way to determine the embedding of $X$ locally at each vertex $v$, by applying \Tr{Whitney} to the surface of a small ball centered at $v$. As we have just seen, it is possible to relax the 3-connectedness condition in \Tr{Whitney}, and this will allow us to relax our condition on the link graphs of $X$ accordingly. However we cannot do so too naively, and the following definition answers the question of how much we can relax the 3-connectedness condition.

Let $\mathcal{F}$ be the set of all $2$-connected graphs with an essentially unique planar embedding. By \Tr{un pl gr}, these are exactly the theta graphs and the subdivisions of 3-connected planar graphs. For a given $2$-complex $X$, let $G(X)$ be the subgraph of the $1$-skeleton $X^1$ consisting of edges belonging to at least three faces. 

\begin{definition} \label{def Whit}
We say that a regular $2$-complex $X$ is \emph{Whitney}, if every link graph $L_{X}(v)$ of $X$ is in $\mathcal{F}$ and $G(X)$ is connected. 
\end{definition}

The assumption that $X$ be regular here is imposed for simplicity only. We can extend the definition verbatim to a non-regular $X$, by allowing $L_{X}(v)$ to be a multigraph. Alternatively, we can define $X$ to be Whitney in the general case, if its barycentric subdivision (see \Sr{ccs}) is Whitney. 

Let $Y(X)$ be the set of vertices $u\in X^0$ \st\ $L_X(u)$ has a vertex of degree at least three.
For any edge $uv\in X^1$ in a Whitney complex, if $u\notin Y(X)$, then the link graph $L_X(u)$ is a cycle, so $uv$ belongs to exactly two faces. Hence if $X$ is Whitney, then $G(X)$ is a connected graph with vertex set $Y(X)$, and $Y(X)$ coincides with the set of vertices $u\in X^0$ \st\ $L_X(u)$ is a subdivision of $3$-connected simple graph or a theta graph.  We allow $G(X)$ to be empty, e.g.\ when $X$ is a triangulation of $\BS^2$.

\medskip
We can now state the main result of this paper, which strengthens \Tr{Whit3D intro}:

\begin{theorem} \label{Whit3D}
Let $X$ be a finite, simply-connected, Whitney, $2$-complex, which is embeddable into $\BS^3$. Then, for every  two locally flat embeddings $\phi,\psi: X \to \BS^3$, there exists a homeomorphism $\alpha : \BS^3 \to \BS^3$ such that $\psi= \alpha \circ \phi$.
\end{theorem}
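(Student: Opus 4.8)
The natural strategy is to build $\alpha$ piece by piece. On the image $\phi(X)$ itself the map $\alpha := \psi \circ \phi^{-1}$ is forced, and it is a homeomorphism $\phi(X)\to\psi(X)$ because $\phi,\psi$ are embeddings; so it remains to extend it across each complementary chamber. The plan is to set up a bijection between the $\phi$-chambers and the $\psi$-chambers under which corresponding chambers have boundaries matched by $\alpha$, and then, provided each chamber boundary is a tame $2$-sphere, to invoke the Generalised Schoenflies \Tr{GST} to extend the boundary homeomorphism over each chamber and glue. Note that by \Prr{chambers} and \Prr{pC conn} each $\partial C$ is already a connected union of faces, which is the combinatorial raw material for this matching.

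To produce the chamber bijection (point (B)), I would first argue locally. By local flatness (L3), a small sphere around $\phi(v)$ meets $\phi(X)$ in a planar embedding of the link graph $L_X(v)$; since $L_X(v)\in\mathcal F$ has an essentially unique planar embedding by \Tr{un pl gr}, the cyclic order in which the incident faces emanate around each edge at $v$ is determined up to a single reflection. Thus $\phi$ and $\psi$ induce the same \emph{rotation system} (the cyclic ordering of $2$-cells around each $1$-cell) at every vertex, up to independent per-vertex reflections. I would then synchronise these reflections globally: using that $G(X)$ is connected, I can propagate a consistent orientation along the edges lying in at least three faces, and using that $X$ is simply connected I can rule out monodromy obstructions, so that after possibly composing $\psi$ with an orientation-reversing homeomorphism of $\BS^3$ the two embeddings carry literally the same rotation system. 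Following Carmesin, the rotation system determines the \emph{local surfaces}, a purely combinatorial description of how chambers border $X$; equal rotation systems yield equal local surfaces, hence the required boundary-matching bijection of chambers.

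The remaining difficulty is point (A): $\partial C$ need not be an embedded $\BS^2$, since two boundary faces may be glued along an edge or meet at a single vertex, so \Tr{GST} does not apply directly. To repair this I would \emph{fatten} $X$ to a complex $\mathrm{fat}(X)$, thickening each cell slightly so that the incidences recorded by the local surfaces are realised by genuinely disjoint pieces, and transport $\phi,\psi$ to locally flat embeddings of $\mathrm{fat}(X)$ carrying the same rotation system. The construction is designed so that every chamber boundary of $\mathrm{fat}(X)$ is now a locally flat $2$-sphere, whence \Tr{GST} yields, for each corresponding pair of chambers, a homeomorphism extending the prescribed boundary map.

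Finally I would glue: the extension over $\mathrm{fat}(X)$ together with the Schoenflies extensions over all chambers agree on shared boundaries by construction, so pasting gives a continuous bijection $\BS^3\to\BS^3$; since $\BS^3$ is compact Hausdorff, this bijection is automatically a homeomorphism, giving $\alpha$ with $\psi=\alpha\circ\phi$. I expect two genuine obstacles. The first is the global synchronisation of the rotation system, where simple-connectedness of $X$ must be used in an essential way to defeat the per-vertex reflection ambiguity inherited from each link graph. The second is the fattening construction together with the verification that its chambers are bounded by spheres; this is the step where the local-flatness hypotheses (L1)--(L3) and the precise structure of the class $\mathcal F$ really do the work.
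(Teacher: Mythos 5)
Your proposal follows essentially the same route as the paper: matching the rotation systems of $\phi$ and $\psi$ via the unique embeddability of the link graphs, passing to Carmesin's local surfaces to get the chamber bijection, fattening $X$ so that chamber boundaries become genuine locally flat $2$-spheres, and applying the generalised Schoenflies \Tr{GST} chamber by chamber before gluing. The one small discrepancy is that simple connectedness is not what synchronises the per-vertex reflections (connectedness of $G(X)$ alone suffices, since both rotation systems are realised by actual embeddings and an edge in three or more faces transmits the orientation); simple connectedness is instead used to show that each chamber boundary, once known to be a surface, is a sphere.
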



As remarked in the introduction, both the simple connectedness of $X$ and the local flatness conditions are indispensable here. To see that the connectedness of $G(X)$ (in \Dr{def Whit}) cannot be dropped either, consider the disjoint union $X \cup Z$ of any two simplicial 2-complexes $X$ and $Z$ (uniquely) embeddable in $\BS^3$, and  join $X$ to $Z$ by adding a triangulated cylinder $C$ so that one boundary cycle of $C$ is identified with the boundary of a face $f$ of $X$ and the other boundary cycle is identified with the boundary of a face $g$ of $Y(X)$. The resulting 2-complex can be embedded in $\BS^3$ in more than one ways: either with both $X,Z$ embedded outside $C \cup \{f,g\}$, or with one of $X,Z$ inside and the other outside $C \cup \{f,g\}$.

The fact that all graphs we allow in \cf\ are planar is not a restriction: if $X$ admits an embedding $\phi$ in $\BS^3$, then it is easy to see that each of its link graphs $L_X(v)$ is planar by restricting $\phi$ to the boundary of a small ball around $v$. The
2-connectedness of the graphs we allow in \cf\ is also indispensable. To see this, start with a triangulation of $\BS^2$ with about 50 vertices embedded in $\BS^3$, and add a few cells so as to create one more chamber. It is easy to do this in such a way that the boundaries of the three resulting chambers $C_1,C_2,C_3$ have about 10,30, and 60 vertices, respectively. Now pick an edge $e=uv$ lying in the middle of the boundaries of $C_2$ and $C_3$, add one new vertex $w$ to this complex, and add a 2-cell $uvw$. It is possible to embed the resulting complex $X$ with $w$ either inside $C_2$ or inside $C_3$, and so $X$ violates the conclusion of \Tr{Whit3D}. But $X$ is not Whitney, because the links of $u,v,w$ are not in \cf, although they are uniquely embeddable by \Tr{un pl gr} (the link of $w$ is an edge, while the  links of $u,v$ are subdivisions of $C_2 \vee K_2$). It is easy to produce a similar example where the link of $w$ is a $K_{1,3}$. Thus we have to restrict \cf\ to the 2-connected graphs  of \Tr{Whit3D}.

\subsection{Infinite complexes}
Recall the example of the Cayley complex of $\Z^3$ from the introduction, which shows that we cannot just drop the finiteness condition in \Tr{Whit3D}. However, we can do so under additional restrictions on the embeddings. Recall that an embedding $\phi: X \to \R^3$ is \defi{accumulation-free}, if $\phi(X^0)$ has no accumulation points in $\R^3$. Note that when $\phi$ is accumulation-free, then a $\phi$-chamber $C$ is bounded \iff\ $\partial C$ contains finitely many faces of $X$. Here we used \Prr{chambers}, which easily extends to this setup. We now state our variant of \Tr{Whit3D} for infinite complexes, generalising \Tr{Whit3D inf intro}:

\begin{theorem} \label{Whit3D inf}
Let $X$ be a locally finite, simply-connected, Whitney, $2$-complex. Let $\phi,\psi: X \to \R^3$  be locally flat, accumulation-free embeddings with bounded chambers. Then there exists a homeomorphism $\alpha : \R^3 \to \R^3$ such that $\psi= \alpha \circ \phi$.
\end{theorem}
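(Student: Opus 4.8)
The overall plan is to mimic the proof of the finite result \Tr{Whit3D}, as sketched in the introduction, and to verify that its three main ingredients survive in the locally finite setting, the only genuinely new work being the assembly of infinitely many chamber-wise homeomorphisms into a single homeomorphism of the non-compact space $\R^3$. Recall that the strategy has three parts: (B) the two embeddings induce the same combinatorial boundary data, so that there is a bijection $C \mapsto C'$ between $\phi$-chambers and $\psi$-chambers with $\phi^{-1}(\partial C)$ and $\psi^{-1}(\partial C')$ the same subcomplex of $X$; (A) after passing to $\mathrm{fat}(X)$ each chamber is bounded by a locally flat $2$-sphere; and finally the step of applying \Tr{GST} on each chamber and gluing.

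First I would establish (B). The proof that $\phi$ and $\psi$ give rise to the same rotation system on $X$ is entirely local: at each vertex $v$ one restricts the embedding to the boundary of a small ball around $\phi(v)$, on which the link graph $L_X(v)$ is topologically embedded, and since $L_X(v)\in\mathcal F$ is $2$-connected with an essentially unique planar embedding one reads off the cyclic order of faces around each incident edge up to a global reflection; the reflections are then matched consistently by propagating along the connected graph $G(X)$, exactly as in the finite case. Since all of this uses only local finiteness of $X$ (to keep the link graphs finite) and the connectivity of $G(X)$ guaranteed by \Dr{def Whit}, it applies verbatim when $X$ is infinite. Simple connectedness of $X$ is used, as in the finite argument, to guarantee that the resulting local surfaces are spheres, so that each chamber boundary is a copy of $\BS^2$; combined with \Prr{pC conn} this yields (A) after the fattening construction of \Sr{sec fat}, which is again a purely local modification of $X$ and so is unaffected by infiniteness.

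The bounded-chambers hypothesis now enters. For each $\phi$-chamber $C$, boundedness makes $\overline C$ compact and, by (A), $\partial C$ is a locally flat $2$-sphere; the same holds for the corresponding $\psi$-chamber $C'$, and $\psi\circ\phi^{-1}$ carries $\partial C$ homeomorphically onto $\partial C'$ by (B). Embedding a ball containing $\overline C$ into $\BS^3$ and applying \Tr{GST} shows that $\partial C$ and $\partial C'$ are standardly embedded, so that each bounds a closed $3$-ball; extending the boundary homeomorphism $\psi\circ\phi^{-1}$ restricted to $\partial C$ over these balls by the Alexander coning trick produces a homeomorphism $\alpha_C\colon \overline C\to \overline{C'}$ that agrees with $\psi\circ\phi^{-1}$ on $\partial C$. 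This is carried out independently for every chamber.

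It remains to glue the $\alpha_C$ into a homeomorphism $\alpha$ of $\R^3$, and this is the step that is new relative to the finite case and where I expect the main difficulty to lie. Because the $\alpha_C$ all restrict to the single map $\psi\circ\phi^{-1}$ on $\phi(X)$, they agree on overlaps, so setting $\alpha|_{\overline C}:=\alpha_C$ defines $\alpha$ unambiguously, and by (B) it is a bijection of $\R^3$ onto itself. The crucial point is continuity, for which I would use accumulation-freeness: since $\phi(X^0)$ has no accumulation point and $X$ is locally finite with bounded chambers, every point of $\R^3$ has a neighbourhood meeting only finitely many faces, hence only finitely many chamber closures, i.e.\ the family $\{\overline C\}$ is a locally finite closed cover of $\R^3$. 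The gluing lemma for locally finite closed covers then makes $\alpha$ continuous, and the identical argument applied to the maps $\alpha_C^{-1}$, whose domains $\{\overline{C'}\}$ form a locally finite closed cover by accumulation-freeness of $\psi$, shows $\alpha^{-1}$ is continuous; thus $\alpha$ is the desired homeomorphism. The main obstacle is precisely making this local finiteness and gluing rigorous: one must rule out, via accumulation-freeness together with \Prr{chambers} extended to the locally finite setting, any accumulation of infinitely many chamber boundaries at a finite point that would destroy continuity. The bounded-chambers assumption prevents a single chamber from being unbounded, while accumulation-freeness prevents the chambers from piling up locally, and together they are exactly what is needed to reduce the global gluing to a locally finite one.
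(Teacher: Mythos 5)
Your proposal follows essentially the same route as the paper: the paper's proof of this theorem simply re-runs the finite argument (rotation-system uniqueness, fattening, chamber-by-chamber Schoenflies) after observing that Lemmas~\ref{lem: Sigma}, \ref{rot fat} and~\ref{lem: 3-con} never used finiteness and that Lemma~\ref{lem: base case} adapts to accumulation-free embeddings in $\R^3$ with bounded chambers. Your additional care in gluing the infinitely many chamber homeomorphisms via a locally finite closed cover is exactly the detail the paper leaves implicit in the phrase ``readily adapts,'' so the two arguments agree in substance.
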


We remark that infinite versions of \Tr{Whitney} have been obtained by Imrich \cite{ImWhi} and Thomassen \& Richter \cite{ThomassenRichter}. The latter was used by the first author \cite{Kleinian} in order to understand discrete group actions on non-compact surfaces. \Tr{Whit3D inf} may find similar applications on discrete group actions on $\R^3$. 

\subsection{Proof of \Tr{un pl gr}}

Fleischner \cite{FleUni} proved \Tr{un pl gr} using somewhat different terminology. 
We provide the following proof that is shorter than Fleischner's. \mymargin{Do you agree?}
\begin{proof}[Proof of \Tr{un pl gr}]
We begin by noticing that whether $G$ has a unique, up to automorphism of $\BS^2$,   embedding in $\BS^2$, is a topological property depending only on the homeomorphism type of $G$. Thus this property is preserved when subdividing edges (or suppressing subdivisions).

It is thus natural to consider the multigraph  $G'$ obtained from $G$ by suppressing vertices of degree $2$, i.e.\ by replacing each maximal path all of whose internal vertices  have degree $2$ by an edge. If $G'$ is $3$-connected, then we are done by Whitney's \Tr{Whitney}.  Otherwise, we consider various cases. If $G'$ is a connected multigraph with at most three edges, it is clear that it has an essentially unique embedding in $\BS^2$. Thus in the following cases we will assume that $G'$ has an essentially unique embedding, and will show that it has at most three edges.

If $G'$ has at most two vertices, and at least 4 parallel edges between them, then  we can embed these edges in different cyclic orderings, and automorphisms of $\BS^2$ cannot map any such cyclic ordering to any other. 

Thus we may assume from now on that $G'$ contains at least three vertices, and a cut-set of size at most 2.


If $G'$ contains both a cut-vertex and a cycle $C$,  let $K$ be the maximal 2-connected subgraph of $G'$  containing $C$. Then $K$ contains a cut-vertex $v$ of $G'$. 
As $K$ is $2$-connected, it must contain a cycle  containing $v$, and we may assume \obda\ that $C\ni v$ is this cycle. Let $A$ be a component of $G'-v$ which is disjoint from $C$, and $B$ be the component intersecting $C$. When we embed $G'$, we can embed $A$ either inside $C$ or outside $C$. If $G'[G'-A] \neq C$, then these two embeddings are not homeomorphic.
If $G'[B\cup \{v\}]=C$, and if $G'[A\cup \{v\}]$ is not just an edge, then $G'[A\cup \{v\}]$ contains a vertex of degree 3 other than $v$. Thus `flipping' $A$ while fixing $B$ gives two non-homeomorphic embeddings of $G'$.
This proves that $G'$ consists of a 2-cycle $C$ attached to an edge, i.e.\ it coincides with $C_2 \vee K_2$. 

If $G'$ has a cut-vertex $v$ but no cycles at all, then all neighbors of $v$ lie in different components of $G'-v$. In this case, $v$ must have degree at most three if $G'$ has an essentially unique embedding, because  otherwise embedding the components of $G-\{v\}$ in two different cyclic ordering where one is not obtained from the other by flipping yields two non-homeomorphic embeddings.  If $G'$ has another cut-vertex $u$ of degree at least three, then let $P$ be  a path from $u$ to $v$. Then $G-P$ has at least four components, and again embedding them in different cyclic ordering yields non-homeomorphic embeddings. It follows that $G'$ must coincide with $K_{1,3}$. 

Finally, if $G'$ has no cut-vertex but a cut set $\{u,v\}$ of size two, then let $A$ be a component of $G'-\{u,v\}$, and let $B:=G'-\{u,v\}-A$.  Find two paths from $u$ to $v$, one path $P$ through $A$ and another path $Q$ through $B$. Thus $P\cup Q$ is a cycle $C$. As all vertices in $G'$ has degree at least three, $G'[A\cup \{u,v\}]$ contains an edge $e$ not in $C$ and $G'[B\cup \{u,v\}]$ also contains an edge $e'$ not in $C$. We can `flip' the embedding of $A$ while keeping $P$ fixed, to obtain two non-homeomorphic embeddings: one where $e$ and $e'$ both lie inside $C$, and another where exactly one of $e,e'$ lies inside $C$. To make this more precise, pick an embedding $\phi: G' \to \BS^2$, and notice that $u,v$ must lie on the boundary of a face of $G'[A\cup \{u,v\}]$ because they are connected by a path in $B$. Therefore, we can find a closed disc $D\subset \BS^2$ containing the image of  $G'[A\cup \{u,v\}]$, with $u,v$ lying on the boundary of $D$. We can reflect $D$ along $P$ to obtain the second embedding. Hence this case is not possible.
\end{proof}

\section{Rotation systems} \label{sec rot sys}

A \defi{rotation system} of a graph $G$ is a family $(\sigma_v)_{v\in V(G)}$ of cyclic orderings of the edges incident with each vertex $v\in V(G)$. Every embedding of $G$ on an orientable surface defines a rotation system, by taking $\sigma_v$ to be the clockwise cyclic ordering in which the edges incident to $v$ appear in the embedding. The rotation system $(\sigma_v)_{v\in V(G)}$ is said to be \defi{planar}, if it can be defined by an embedding of $G$ in the sphere $\BS^2$. 

Let $X$ be a regular $2$-complex, and let $\overleftrightarrow{E}(X)$ denote the set of \defi{directions} of its 1-cells, that is, the set of directed pairs $\overrightarrow{xy}:= \left<x,y \right>$ such that $xy\in X^1$. Thus every 1-cell gives rise to two elements of $\overleftrightarrow{E}(X)$. 
A \defi{rotation system} of  $X$ is a family $(\sigma_e)_{e \in \overleftrightarrow{E}(X)}$ of cyclic orderings $\sigma_e$ of the faces incident with each  $e = \overrightarrow{xy} \in \overleftrightarrow{E}(X)$, such that if $e'=\overrightarrow{yx}$, then $\sigma_{e'}$ is the reverse of $\sigma_e$. A rotation system $(\sigma_e)_{e \in \overleftrightarrow{E}(X)}$ of  $X$ induces a rotation system $\sigma^v$ at each of its link graphs $L_X(v)$ by restricting to the directions of 1-cells emanating from $v$:  \fe\ $u\in V(L_X(v))$ we let $\sigma^v_{u}$ be the cyclic order obtained from 
$\sigma_{\overrightarrow{vu}}$ by replacing each face $f$ appearing in the latter by the unique neighbour $w$ of $u$ in $V(L_X(v))$ such that $w,u,v$ appear consecutively in $f$.

A rotation system of a regular $2$-complex $X$ is \defi{planar}, if it induces a planar  rotation system on each of its link graphs.
Note that every locally flat embedding $\phi$ of $X$ into a $3$-manifold defines a planar rotation system, by letting $\sigma_e$ be the cyclic order in which the images of the faces incident with $e$ appear in $U_x$, where $x$ is any interior point of $\phi(e)$, and $U_x$ is as in the definition of local flatness (\Sr{loc flat}).

The following theorem of Carmesin \cite{CarEmbII} characterises all $2$-complex embeddable onto $\R^3$. This theorem is stated for simplicial complexes in \cite{CarEmbII}, but it is easy to extend to general $2$-complexes by
considering the barycentric subdivision, noting that a rotation system for the triangulated simplicial $2$-complex is canonically determined from the rotation system of the original $2$-complex.

\begin{theorem}[\cite{CarEmbII}] \label{JCtheorem}
A simply connected simplicial complex admits an embedding in $\R^3$ if and only if it admits a planar rotation system.
\end{theorem}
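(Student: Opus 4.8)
I would prove the two implications separately; the forward one is essentially routine, and the converse is the crux.

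For the forward implication, suppose $\phi\colon X\to\R^3$ is an embedding, and recover a planar rotation system by working locally at each vertex. For $v\in X^0$ I would choose a ball $B_v$ centred at $\phi(v)$ small enough that $\phi(X)\cap B_v$ is a cone over $\phi(X)\cap S_v$ with apex $\phi(v)$, where $S_v=\partial B_v$. Then $\phi(X)\cap S_v$ is a copy of the link graph $L_X(v)$ embedded in the $2$-sphere $S_v$, and reading off the cyclic order in which the faces incident with each edge $vu$ meet $S_v$ yields the induced rotation $\sigma^v$, which is planar by construction. One then checks that the orderings obtained at the two endpoints of each $1$-cell are mutually reverse, so that the local data assemble into a bona fide planar rotation system of $X$ in the sense of \Sr{sec rot sys}. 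The only delicate point is that $\phi$ need not be locally flat, so I would first justify that a ball with the cone property always exists; for a finite simplicial complex this follows from the tameness of the radial structure around a vertex, or by reducing to a piecewise-linear embedding.

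For the converse I would use the rotation system to \emph{build} the embedding. A planar rotation system prescribes exactly the local pattern $\sh(L_X(v))$ at each vertex and a cyclic order of incident faces at each edge, so I would thicken $X$ accordingly into a compact $3$-manifold-with-boundary $M$: fatten each face to a slab and glue the slabs around each edge in the prescribed cyclic order, and use the planar embeddings of the link graphs to fatten each vertex to a ball. The cyclic (rather than merely set-theoretic) nature of the rotation at each edge guarantees the slabs close up consistently, and the planarity at each vertex guarantees that every vertex link in $M$ is a $2$-sphere, so $M$ is genuinely a manifold; its boundary components are precisely the \emph{local surfaces} carried by the rotation system. Since $M$ is a regular neighbourhood of $X$, it deformation retracts onto $X$, hence $M$ is simply connected. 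Now a half-lives-half-dies argument shows that $H_1(\partial M)\to H_1(M)=0$ forces $H_1(\partial M)=0$, so every boundary component of $M$ is a $2$-sphere. I would cap each such sphere with a $3$-ball to obtain a closed $3$-manifold $\widehat{M}$, which by van Kampen remains simply connected, and finally invoke the Poincar\'e conjecture to conclude $\widehat{M}\cong\BS^3$. The inclusion $X\hookrightarrow M\subseteq\widehat{M}\cong\BS^3$, composed with the removal of a point away from $X$, is the desired embedding into $\R^3$.

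The main obstacle is the last step of the converse: converting the abstract thickening into an actual embedding in $\BS^3$. The reduction of the local surfaces to spheres is clean homology, but identifying the capped manifold with $\BS^3$ genuinely requires the resolution of the Poincar\'e conjecture, and I expect this (together with \Tr{GST} for any relative or uniqueness refinements) to be unavoidable. In particular, the tempting induction on the number of faces fails exactly here, because re-inserting a removed face requires its boundary cycle to bound a disc in the complement of the partial embedding, and controlling that complement is not possible without precisely this deep $3$-manifold input.
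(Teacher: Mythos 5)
This statement is not proved in the paper at all: it is quoted verbatim from Carmesin \cite{CarEmbII}, and the authors explicitly say they only use the easy forward direction, remarking that the backward direction ``relies on Perelman's theorem''. So there is no in-paper proof to compare against; the relevant comparison is with Carmesin's argument, and your outline is essentially that argument: extract the rotation system from local data for the forward direction, and for the converse thicken $X$ along the rotation system into a compact $3$-manifold $M$ whose boundary components are the local surfaces, kill $H_1(\partial M)$ by half-lives-half-dies using $\pi_1(M)=\pi_1(X)=1$, cap the resulting spheres, and invoke the Poincar\'e conjecture. Two places where your sketch understates the work. First, in the forward direction, a topological embedding of a $2$-complex in $\R^3$ need not be tame (wild arcs already show that no small sphere around a vertex need meet the image in a copy of the link graph), so ``tameness of the radial structure'' cannot be asserted; one genuinely needs a PL-approximation theorem for $2$-complexes in $3$-manifolds before reading off the rotation system, and this is a nontrivial ingredient. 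Note that the present paper sidesteps this entirely by only ever associating rotation systems to \emph{locally flat} embeddings. Second, in the converse, the verification that the glued-up thickening is a manifold and that its boundary components are exactly the local surfaces (and that planarity of the vertex rotations makes the vertex links of $M$ into spheres rather than higher-genus surfaces) is where most of Carmesin's effort goes; your appeal to ``the planarity at each vertex guarantees that every vertex link in $M$ is a $2$-sphere'' is the correct statement but is the crux rather than an observation. With those caveats, the plan is sound and the reliance on Perelman is, as you suspect and as the paper confirms, unavoidable in the current state of the art.
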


We will only use the easy forward direction of \Tr{JCtheorem}; the backward direction relies on Perelman's theorem.

\subsection{Uniqueness of rotation systems} \label{sec Urs}
The following lemma will help us  show that for any two embeddings $\phi, \psi$ of a  Whitney complex, there is a bijections between the $\phi$-chambers and the $\psi$-chambers. 

\begin{lemma}\label{lem: Sigma}
Let $X$ be a simply-connected, Whitney, $2$-complex. Suppose that $\phi, \psi : X \rightarrow \BS^3$ are two embeddings, and let  $\sigma$ and $\rho$ denote the  planar rotation systems induced by $\phi$ and $\psi$.
Then $\sigma$ and $\rho$ coincide up to  reversion.
\end{lemma}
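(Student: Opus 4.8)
The plan is to attach to each vertex whose link graph is genuinely rigid a sign recording whether $\rho$ agrees with $\sigma$ or with its reversal there, to show this sign is constant across each edge of $G(X)$, and then to use connectedness of $G(X)$ to make it globally constant.

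First I would record the local input. Since $X$ is Whitney, every link graph $L_X(v)$ lies in \cf, hence has an essentially unique planar embedding; equivalently, any two planar rotation systems of $L_X(v)$ are either equal or reverses of one another. As $\sigma$ and $\rho$ are \emph{planar} rotation systems of $X$, the induced systems $\sigma^v,\rho^v$ are planar rotation systems of $L_X(v)$, so for each $v$ either $\rho^v=\sigma^v$ or $\rho^v=(\sigma^v)^{-1}$. For $v\in Y(X)$ the link graph has a vertex $u$ of degree $\ge 3$, and a cyclic order of $\ge 3$ distinct elements never equals its reverse; hence $\sigma^v\neq(\sigma^v)^{-1}$ and exactly one alternative holds. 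This lets me define unambiguously $s(v)\in\{+1,-1\}$ by $\rho^v=(\sigma^v)^{s(v)}$ for every $v\in Y(X)$, with the convention $(\sigma^v)^{+1}=\sigma^v$. The point of invoking \cf, rather than mere edge-by-edge uniqueness, is that the single sign $s(v)$ simultaneously governs all directions $\overrightarrow{vu}$ emanating from $v$.

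The key step is to translate $s(v)$ into a statement about face orderings and then compare the two endpoints of an edge of $G(X)$. The correspondence sending a face $f$ incident with $\overrightarrow{vu}$ to the neighbour $w$ of $u$ in $L_X(v)$ with $w,u,v$ consecutive in $f$ is a cyclic-order isomorphism from $\sigma_{\overrightarrow{vu}}$ onto $\sigma^v_u$; it is purely combinatorial (independent of the embedding) and commutes with reversal. Consequently $\rho^v=(\sigma^v)^{s(v)}$ is equivalent to $\rho_{\overrightarrow{vu}}=(\sigma_{\overrightarrow{vu}})^{s(v)}$ for every $u\sim v$. Now fix an edge $vu\in G(X)$, so both endpoints lie in $Y(X)$ and $\overrightarrow{vu}$ is incident with $\ge 3$ faces. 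Reading the relation at $v$ gives $\rho_{\overrightarrow{vu}}=(\sigma_{\overrightarrow{vu}})^{s(v)}$, and at $u$ gives $\rho_{\overrightarrow{uv}}=(\sigma_{\overrightarrow{uv}})^{s(u)}$. Applying the reversal axiom to both $\rho$ and $\sigma$ (namely $\rho_{\overrightarrow{uv}}=(\rho_{\overrightarrow{vu}})^{-1}$ and $\sigma_{\overrightarrow{uv}}=(\sigma_{\overrightarrow{vu}})^{-1}$) rewrites the second relation as $(\sigma_{\overrightarrow{vu}})^{-s(v)}=(\sigma_{\overrightarrow{vu}})^{-s(u)}$; since $\sigma_{\overrightarrow{vu}}\neq(\sigma_{\overrightarrow{vu}})^{-1}$ (there are $\ge 3$ faces), this forces $s(v)=s(u)$.

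It then remains to propagate. By \Dr{def Whit} the graph $G(X)$ is connected with vertex set $Y(X)$, so $s$ is constant on $Y(X)$, say $s\equiv\varepsilon$. For a direction $\overrightarrow{vu}$ incident with $\ge 3$ faces we have $\rho_{\overrightarrow{vu}}=(\sigma_{\overrightarrow{vu}})^{\varepsilon}$ by definition of $\varepsilon$; for a direction incident with at most two faces, the cyclic order of the combinatorially identical set of incident faces is unique and equals its own reverse, so $\rho_{\overrightarrow{vu}}=\sigma_{\overrightarrow{vu}}=(\sigma_{\overrightarrow{vu}})^{\varepsilon}$ automatically. Hence $\rho_{\overrightarrow{vu}}=(\sigma_{\overrightarrow{vu}})^{\varepsilon}$ for every direction, which says exactly that $\rho=\sigma$ if $\varepsilon=+1$ and $\rho=\sigma^{-1}$ if $\varepsilon=-1$, as claimed. (When $G(X)=\emptyset$ every direction carries at most two faces and $\rho=\sigma$ trivially.)

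I expect the main obstacle to be bookkeeping rather than conceptual: making the face-to-neighbour correspondence precise enough that the link-graph statement $\rho^v=(\sigma^v)^{s(v)}$ really is equivalent to the per-edge statement $\rho_{\overrightarrow{vu}}=(\sigma_{\overrightarrow{vu}})^{s(v)}$, including verifying that it is a genuine bijection on the faces incident with $\overrightarrow{vu}$ in the regular setting and that it respects reversal. It is worth noting that simple connectedness of $X$ does not appear to be used for this lemma; the two working hypotheses are that each link graph lies in \cf, which yields the well-defined local sign, and that $G(X)$ is connected, which propagates it to a single global sign.
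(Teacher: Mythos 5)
Your proposal is correct and follows essentially the same route as the paper's proof: use the essentially unique planar embedding of each link graph (membership in $\mathcal F$) to reduce to a binary choice at each vertex of $Y(X)$, propagate that choice across each edge of $G(X)$ via the fact that a cyclic order on $\ge 3$ faces differs from its reverse, and conclude by connectedness of $G(X)$, with the two-face edges handled trivially. Your explicit sign function $s(v)$ and the verification that the face-to-neighbour correspondence commutes with reversal merely formalise what the paper does by "recursively repeating this argument," and your appeal to $\mathcal F$ rather than to Whitney's theorem for $3$-connected graphs is in fact the more accurate citation, since link graphs of a Whitney complex may also be theta graphs.
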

\begin{proof}
Recall, from \Sr{sec Wh comp}, that $Y(X)$ is  the set of vertices $v\in X^0$ \st\ $L_X(v)$ contains a vertex of degree at least 3, in other words, an edge of $G(X)$, 
 and that $G(X)$ forms a connected graph with vertex set $Y(X)$.

If an edge $e\in X^1$ is not in $G(X)$, then it is incident with only two faces, hence $\sigma_e$  and $\rho_e$ is the unique cyclic ordering on two elements. In particular $\sigma$ and $\rho$ coincide on the complement of $G(X)$. 

To see that $\sigma$ and $\rho$ coincide on $G(X)$ as well, fix a vertex $v\in Y(X)$ (if $Y(X)$ is empty, then so is $G(X)$, and we are done by the above remark).
Then \Tr{Whitney} says that the rotation systems $\rho^v, \sigma^v$ on $L_X(v)$ induced by $\phi$ and $\psi$, respectively, coincide up to reversion. 
Pick $vw \in E(G(X))$, and recall that $\rho_{\overrightarrow{vw}} = \rho_{\overrightarrow{wv}}^{-1}$. Moreover, as $vw$ belongs to at least three cells of $X$, we have $(\rho_{\overrightarrow{wv}})^{-1}\neq \rho_{\overrightarrow{wv}}$. 
Recall that $\rho^{w}_v$ is determined by $\rho_{\overrightarrow{wv}}$.
As $vw$ belongs to at least three cells of $X$, if we know which of the two possible rotation systems on $L_X(v)$ is induced by $\psi$, then we can deduce which of the two possible rotation systems $\psi$ induces on $L_X(w)$. As $G(X)$ is a connected graph with vertex set $Y(X)$, by recursively repeating this argument we can determine the rotation system $\rho^u$ for all vertices $u\in Y(X)$ as well as the rotation system $\rho_e$ for all edges $e\in X^1$ with $e\subseteq Y(X)$ starting with $\rho^v$, and the same applies to $\sigma_e$ and $\sigma^v$.

For all edges $e\in X^1$ incident to a vertex not in $Y(X)$, only two faces are incident to $e$, hence the choice $\rho_e=\sigma_e$ is unique as already mentioned. To summarize, we have shown that $\rho^v$ determines the full rotation system $(\rho_e)_{e\in X^1}$. Moreover, as $\rho^v$ is either $\sigma^v$ or its reversion, we deduce that the rotation system $(\rho_e)_{e\in X^1}$ coincides with either $(\sigma_e)_{e\in X^1}$ or its reversion.
\end{proof}

\subsection{Local surfaces} \label{sec loc sur}

Assume a rotation system  $\sigma=(\sigma_e)_{e \in \overleftrightarrow{E}(X)}$ of a $2$-complex $X$ is induced from an embedding $\phi : X\rightarrow \BS^3$.
Then two faces $f,f'$ adjacent in $\sigma_e$ must lie on the boundary of the same {chamber}. This information can be encoded by using the notion of local surfaces defined in \cite{CarEmbII}.

Recall that each face of $X$ gives rise to two directed faces.
We say that two directed faces $f,f'$ of $X$ are \emph{locally related} via an edge $e=uv \in X^1$, with respect to the given planar rotation system $\sigma$, if $f$ appears right before $f'$ in $\sigma_{\overrightarrow{uv}}$, and $u$ appears right before $v$ in $f$, and $v$ appears right before $u$ in $f'$. 
A \emph{local surface} of $X$ with respect to $\sigma$ is an equivalence class of the transitive reflexive closure of the relation of being locally related. 

For a local surface $S$, a directed face $f\in S$, and a vertex $v \in f$, we define the \emph{$S$-local-disk}  at $v$ containing $f$, to be the set of directed faces $(f=)f_1,\dots, f_s$ each of which contains $v$, \st\  $f_i$ and $f_{i+1\pmod s}$ are locally related via an edge containing $v$ for each $i\in [s]$. For a directed face $f\in S$, we define \emph{the $S$-local-disk at $f$} to be the union of all $S$-local-disks (at the vertices of $f$) containing $f$. Note that the faces of a local-disk do not always form a topological disk ---although they do so in many cases. To see this, consider two copies of the tetrahedron in $\R^3$, glue them along an edge,  and let $S$ be the collection of exterior directed faces. Then it is easy to see that the local disk at a glued vertex or at any face is not a topological disk.




Suppose that $\phi$ is a locally flat embedding of $X$ into $\BS^3$, and consider a face $f$. Then any small enough neighbourhood $U_{\phi(y)}$ of any $y\in \phi(f)$ is separated into two components $W,W'$ by $\phi(f)$.  We would like to associate one of these components to each of the two possible directions $f_1,f_2$ of $f$ in a consistent way. This is not hard to do, e.g.\ by applying the `rule of thumb'. We then say that $f_1$ \defi{touches} $W$ and $f_2$ touches $W'$. We can then generalise the notion to define what it means for a directed face to touch a chamber or other region as follows. Given a connected open set $U\subset \BS^3$ (e.g.\ a chamber), we say that the directed face $f_1$ \defi{touches} $U$, if $\phi(f)\cap \partial U\neq \emptyset$ and for any $y\in \phi(f)\cap \partial U$ which is an interior point of $\phi(f)$, $f_1$ touches a component $W$ of $U_{\phi(y)} \sm \phi(f)$ contained in $U$.

Note that any directed face touches a unique chamber, and it is easy to see that two locally related directed faces touch the same chamber, hence the faces in a local surface touch a unique chamber.

\comment{
Consider a face $f= [x_1,\dots,x_t]$.
As $\phi$ is locally flat, for each $y\in \phi(f)$, we can find a collection of internally-disjoint curves $L_1,\dots, L_t$ on $\phi(f)$, where $L_i$ is a curve between $y$ and $\phi(x_i)$, and $L_i\cap L_j =\{y\}$ for $i\neq j\in [t]$.
As $\phi$ is locally flat, there exists an open neighbourhood $U_{\phi(y)}$ containing $\phi(y)$ such that $(U_{\phi(y)},\phi(f)\cap U_{\phi(y)})$ is homeomorphic to $(\mathbb{R}^3,\mathbb{R}^2)$. 
Hence,  $U_{\phi(y)}\cap \phi(f)$ divides $U_{\phi(y)}$ into two regions.  
A directed face $\langle x_1,\dots, x_t \rangle$ induces a cyclic ordering on the curves $L_1,\dots, L_t$, and this ordering determines a component $W(y,f)$ of $U_{\phi(y)}$ by the `rule of thumb'. 
We say that a directed face $f$ \defi{touches} a connected open set $U\subset \BS^3$, if $\phi(f)\cap \partial U\neq \emptyset$ and for any $y\in \phi(f)\cap \partial U$ which is an interior point of $\phi(f)$, the set $W(y,f)$ intersects  $U$. Note that any directed face touches a unique chamber, and it is easy to see that two locally related directed faces touch the same chamber, hence the faces in a local surface touch a unique chamber. 
}

\section{Boundaries of chambers} \label{sec chambers}

The rough idea of our proof of Theorem~\ref{Whit3D} is as follows. For  two given locally flat embeddings $\phi,\psi: X \rightarrow \BS^3$, we define a $2$-dimensional (simplicial) complex ${\rm fat}(X)$ containing $X$ and embeddings $\phi',\psi' : {\rm fat}(X)\rightarrow \BS^3$ which induces $\phi,\psi$ when restricted to $X$. 
We will show that any two embeddings $\phi', \psi'$ induce the same rotation systems on ${\rm fat}(X)$ (up to inversion) and this will define local surfaces of ${\rm fat}(X)$. Moreover, we will define ${\rm fat}(X)$ in a way that the closure of all chambers of ${\rm fat}(X)$ are homeomorphic to closed balls. By establishing bijections between chambers and local surfaces of ${\rm fat}(X)$, we find bijections between $\phi'$-chambers and $\psi'$-chambers. As any closure of such a chamber is homeomorphic to a closed ball, we can find homeomorphisms between closures of corresponding chambers which will yield the desired homeomorphism $\alpha$ once restricted to $X$.

The following three lemmas will help us to show that the $\phi'$-images and $\psi'$-images of local surfaces will be homeomorphic to $\BS^2$, and the closures of chambers are homeomorphic to closed balls. A \defi{surface} is a 2-dimensional manifold (without boundary).

\begin{lemma} \label{lem S2}
Let $X$ be a locally finite, simply-connected, 2-complex. Let $\phi : X \to M$ be an accumulation-free embedding in $M=\BS^3$ or $M=\R^3$ with bounded chambers, and let $C$ be a $\phi$-chamber. If $\partial C$ is  a surface, then $\partial C$ is homeomorphic to $\BS^2$.
\end{lemma}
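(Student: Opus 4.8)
The plan is to first pin down the topology of $\partial C$ as an abstract surface, and then to use the simple-connectedness of $X$ together with the embedding in $\BS^3$ to force its genus to be zero. Throughout I would work in $\BS^3$; in the case $M=\R^3$ one passes to the one-point compactification, the boundedness of $C$ keeping $\overline{C}$ away from the added point. First I would record that $\partial C$ is a \emph{closed} surface: it is compact because $C$ is a bounded chamber, so by (the extension of) \Prr{chambers} it is a union of finitely many faces of $X$, each compact; and it is connected by \Prr{pC conn}. Being a compact connected surface that embeds in $\BS^3$, it is orientable, since no closed non-orientable surface embeds in $\BS^3$. Thus $\partial C\cong\Sigma_g$, the orientable surface of genus $g$, and it remains to show $g=0$.

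The key structural observation I would isolate is that all of $\phi(X)$ lies on one side of $\partial C$. Since $\partial C\cong\Sigma_g$ is connected, Alexander duality (valid as $\partial C$ is a finite CW complex) gives that $\BS^3\setminus\partial C$ has exactly two components. As $C$ is open with $\overline{C}\cap\phi(X)=\partial C$, it is clopen in $\BS^3\setminus\partial C$ and hence equals one of these two components; call the other one $C'$. Any face $f$ not contained in $\partial C$ has interior disjoint from $\partial C$, and disjoint from $C$ as well (face interiors lie in $\phi(X)$, which misses the chamber $C$), so $\mathrm{int}(f)\subseteq C'$ and therefore $f\subseteq\overline{C'}$. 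Consequently $\phi(X)\subseteq\overline{C'}$. If in fact $\phi(X)=\partial C$, then $X\cong\Sigma_g$ is simply connected, forcing $g=0$ at once; so I may assume the interesting case in which the rest of $X$ sits in $\overline{C'}$.

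To kill the genus I would run a linking-number argument. Suppose $g\ge 1$ and choose simple closed curves $a,b$ on $\partial C$ with algebraic intersection number $a\cdot b=1$ on the surface. Because $X$ is simply connected and $b\subseteq\partial C\subseteq\phi(X)$, the curve $b$ bounds a singular $2$-chain $\Gamma$ in $\phi(X)$, and by the previous paragraph $\Gamma\subseteq\overline{C'}$. Pushing $a$ off $\partial C$ to the $C$-side yields a cycle $a'\subseteq C$ disjoint from $\overline{C'}\supseteq\Gamma$, so the linking number $\mathrm{lk}(a',b)=a'\cdot\Gamma=0$. On the other hand $a'$ is a normal pushoff of $a$, so $\mathrm{lk}(a',b)$ equals the surface intersection number $a\cdot b=\pm1$, a contradiction; hence $g=0$ and $\partial C\cong\BS^2$. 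Equivalently, one can phrase this homologically: the inclusion induces the zero map $H_1(\partial C)\to H_1(\phi(X))=0$, which factors through $H_1(\overline{C'})$, and combined with the ``half lives, half dies'' principle on the $C$-side this again yields $g=0$.

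The main obstacle is precisely the step of pushing $a$ off the surface, together with the claim $\mathrm{lk}(a',b)=a\cdot b$: this presupposes that $\partial C$ is two-sided with a well-behaved normal direction, which could fail if $\phi$ embeds $\partial C$ wildly in $\BS^3$. Orientability rules out one-sidedness, but to obtain a genuine collar — so that the pushoff, and the identification of the two complementary domains with the two sides of that collar, are legitimate — one wants tameness of the embedded surface. In the applications of this lemma $\partial C$ is locally flat, which supplies the collar and makes the linking computation (or the Mayer--Vietoris decomposition underlying the homological variant) rigorous; the degenerate case $\phi(X)=\partial C$ handled above and the verification that Alexander duality applies are then routine.
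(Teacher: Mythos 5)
Your overall strategy is sound and, in its homological form, coincides with the paper's: the paper takes $Z:=M\setminus C$ (the union of $\phi(X)$ with all chambers other than $C$), observes that $Z$ is a compact simply-connected $3$-manifold with connected boundary $\partial C$ (simply connected because every loop in $Z$ is homotopic into $\phi(X)$, and $\pi_1(X)=1$), and concludes $\partial C\cong\BS^2$ --- which is exactly the ``half lives, half dies'' computation you sketch at the end: the kernel of $H_1(\partial C;\Q)\to H_1(Z;\Q)$ must have rank $g$, but here it is all of $H_1(\partial C;\Q)$, of rank $2g$. Your set-up (exactly two complementary components of $\partial C$, the inclusion $\phi(X)\subseteq\overline{C'}$, orientability, and the tameness caveat about collars) is fine, and indeed more explicit than the paper on the collar issue.

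However, the linking-number argument you offer as your primary route contains a genuine error: for a pushoff $a'$ of $a$ to \emph{one} side of the surface, $\mathrm{lk}(a',b)$ is the Seifert pairing $V(a,b)$, not the intersection number $a\cdot b$; what is true is that $V(a,b)-V(b,a)=\pm\, a\cdot b$, i.e.\ the \emph{difference} of the linking numbers of the two pushoffs of $a$ equals $a\cdot b$. (Standard counterexample: on the boundary torus of an unknotted solid torus $V\subset\BS^3$, with $a$ the meridian and $b$ the longitude, the pushoff of $a$ \emph{into} $V$ has linking number $0$ with $b$, even though $a\cdot b=1$.) So from $\mathrm{lk}(a',b)=0$ alone you cannot conclude $a\cdot b=0$. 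The argument is easily repaired: since $X$ is simply connected, \emph{both} $a$ and $b$ bound $2$-chains in $\phi(X)\subseteq M\setminus C$, so pushing either curve into $C$ gives $V(a,b)=V(b,a)=0$, whence $a\cdot b=0$ for all $a,b$ on $\partial C$, forcing $g=0$. With that correction --- or by simply running your homological variant --- the proof is complete and essentially equivalent to the paper's.
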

\begin{proof}
Let $Z\subset M$ be the union of $X$ with all chambers of $\phi$ except $C$. Then $Z$ is homeomorphic to a solid surface embedded in $M$ with boundary homeomorphic to $S:= \partial C$. It is easy to see that $Z$ is simply-connected, because any loop in $Z$ is homotopic to a loop in $X$. Moreover, $S$ is connected by \Prr{pC conn}. Thus $S$ is homeomorphic with $\BS^2$.
\COMMENT{This may justify this: https://math.stackexchange.com/questions/1225617/is-a-compact-simply-connected-3-manifold-necessarily-s3-with-b3s-removed.

or this: https://math.stackexchange.com/questions/1286353/if-m-is-a-4-dimensional-compact-simply-connected-manifold-with-boundary-wha}

\end{proof}

In order to use Lemma~\ref{lem S2}, one has to show that the boundary of each chamber is a surface.
The boundary of a chamber will be the image of a local surface. Hence, we need to ensure that any local surface $S$ is a $2$-dimensional manifold. For this, we need to show that it is locally a disk, i.e.\ any $S$-local-disk at a vertex $v$ is homeomorphic to a closed disk and there is only one $S$-local-disk at $v$. The following lemma ensures the former. The uniqueness of the $S$-local-disk at $v$ is not always satisfied, and this is the reason we introduce the fattening ${\rm fat}(X)$ in the next section.

\begin{lemma}\label{lem: disk is disk}
Let $X$ be a locally $2$-connected simplicial $2$-complex, let  
$\sigma$ be a planar rotation system on $X$, and $S$ a local surface of $X$ with respect to $\sigma$.
Then for each vertex $v\in S^0$,  any $S$-local-disk $D$ at $v$ is homeomorphic to a closed disk. \COMMENT{this is one of the main obstacles for generalising to higher dimensions.}
\end{lemma}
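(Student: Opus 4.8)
The plan is to identify the $S$-local-disk $D$ at $v$ with the cone over a facial cycle of the planar embedding of the link graph $L_X(v)$ determined by the induced rotation $\sigma^v$, and then read off directly that $D$ is a triangulated disk. The central observation is that walking around $v$ through locally related directed faces is exactly face-tracing in the plane graph $(L_X(v),\sigma^v)$.

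First I would set up the combinatorial dictionary between the faces of $X$ incident with $v$ and the edges of $L_X(v)$. Since $X$ is simplicial, each face at $v$ is a triangle $[v,a,b]$ and corresponds bijectively to the edge $ab$ of $L_X(v)$, and a choice of direction of the face induces a direction of this link edge. I would then unwind the definition of being \emph{locally related via an edge containing $v$}: if $f_i$ and $f_{i+1}$ are locally related via the edge $va$, then (by the clauses ``$v$ before $a$ in $f_i$'', ``$a$ before $v$ in $f_{i+1}$'') we have $f_i=[v,b,a]$ and $f_{i+1}=[v,a,c]$ sharing the edge $va$ of $X$, so the corresponding link edges $ba$ and $ac$ share the vertex $a$ of $L_X(v)$. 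Hence the cyclic sequence $f_1,\dots,f_s$ defining $D$ projects to a closed walk $W=a_0a_1\cdots a_{s}$ (with $a_s=a_0$) in $L_X(v)$, where $f_i$ corresponds to the edge $a_{i-1}a_i$. The key computation is to check, using the definition of $\sigma^v$ (obtained from $\sigma_{\overrightarrow{va}}$ by replacing each incident face by the appropriate neighbour of $a$), that the clause ``$f_i$ appears right before $f_{i+1}$ in $\sigma_{\overrightarrow{va}}$'' says precisely that $W$ turns according to $\sigma^v$ at each vertex $a_i$. In other words, $W$ is exactly a facial walk of the combinatorial plane graph $(L_X(v),\sigma^v)$.

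Next, since $\sigma$ is a planar rotation system, $\sigma^v$ is a planar rotation system of $L_X(v)$, so it is realized by an embedding $L_X(v)\hookrightarrow\BS^2$ whose face boundaries are exactly the facial walks of $\sigma^v$. As $X$ is locally $2$-connected, $L_X(v)$ is a $2$-connected graph with simple $1$-skeleton; by the classical fact that every face of a $2$-connected plane graph is bounded by a cycle, the facial walk $W$ is a cycle $a_1a_2\cdots a_s a_1$ with $s\ge 3$ and pairwise distinct vertices $a_i$.

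Finally I would translate back to $X$. Distinct edges of $L_X(v)$ correspond to distinct faces of $X$ (again by simpliciality), so the $f_i$ are $s$ distinct triangles $T_i=[v,a_{i-1},a_i]$. Consecutive triangles meet in the edge $va_i$, while any two non-consecutive triangles meet only in $v$ because the $a_i$ are distinct. Hence $\bigcup_i T_i$ is exactly the cone with apex $v$ over the cycle $a_1\cdots a_s$, a triangulated disk, which is homeomorphic to a closed disk. I expect the main obstacle to be the middle step: verifying that the local-relatedness clause, phrased through the rotation system $\sigma$ on $X$ and its projection $\sigma^v$, matches the face-tracing rule on the nose. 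This requires fixing orientation conventions (successor versus predecessor, together with the reversal rule $\sigma_{\overrightarrow{yx}}=\sigma_{\overrightarrow{xy}}^{-1}$) and checking that the opposite traversals of the shared edge $va$ in $f_i$ and $f_{i+1}$ yield consistent turning; the remaining steps are a routine bijection count and an appeal to the standard structure of faces of $2$-connected plane graphs.
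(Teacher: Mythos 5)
Your proof is correct and follows essentially the same route as the paper's: both translate the $S$-local-disk at $v$ into a closed walk in $L_X(v)$ that respects the induced rotation $\sigma^v$, use planarity of $\sigma^v$ together with $2$-connectedness to conclude that the walk is a cycle with distinct vertices, and then observe that the union of the triangles is a cone over that cycle, hence a closed disk. The only difference is that you invoke the classical theorem that face boundaries of $2$-connected plane graphs are cycles, whereas the paper reproves that fact inline by showing that a repeated vertex $u_1=u_t$ of the walk would be a cut vertex of $L_X(v)$.
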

\begin{proof}
Assume that we have a $S$-local-disk $D$ which consists of directed faces $f_1,\dots, f_{s+1}$ and for each $i\in [s]$, the face $f_i$ is locally related to $f_{i+1}$ via the edge $vu_i$. We claim that $u_1,\dots, u_s$ are all distinct.

Suppose they are not all distinct.
By shifting indices, we may assume that $u_1= u_{t}$ and $u_1,\dots, u_{t-1}$ are all distinct with $t\leq s$.
Then in the link graph $L_X(v)$, the vertices $u_1,\dots, u_{t-1}$ form a cycle $C$. Furthermore, the rotation system $\sigma^v$ induced by $\sigma$ is planar, so $L_X(v)$ has a planar embedding $\phi$ which induces $\sigma^v$.  
We may assume that $u_{t+1}$ lies outside $C$ in the planar embedding $\phi$ and  $u_i u_{i+1}$ is right after $u_{i-1}u_i$ in the cyclic ordering $\sigma^v_{u_i}$ for each $i\in [t-1]$.

Hence for each $i\in [t-1]\setminus\{1\}$, all neighbors of $u_i$ lie inside $C$ in the planar embedding $\phi$. However, this implies that $u_1=u_t$ is a cut vertex for $L_X(v)$, a contradiction as $X$ is locally $2$-connected. Hence $u_1,\dots, u_s$ are all distinct and $f_{s+1}=f_1$. As every face consists of three vertices, $\phi(D)$ is a homeomorphic to a closed disk.
\end{proof}

\comment{
\subsection{Move this stuff}
The following is a generalisation of the Jordan separation theorem to higher dimensions. \mymargin{Remove!} We will only need the $n=3$ case.

\begin{theorem}[\cite{Schmaltz}]  \label{Schmaltz}
Any  compact,  connected  hypersurface $X$ in $\R^n$  divides $\R^n$ into  two  connected  regions;  the  \defi{outside} $D_0$ and the \defi{inside} $D_1$.  Furthermore, the closure of $D_1$ is itself a compact manifold with boundary $X$.
\end{theorem}

\begin{corollary} \label{CorSchm}
Let $S$ be a closed surface and $\phi,\psi: S \to \R^3$ two embeddings of $S$. Then the interior of $\phi(S)$ is homeomorphic to the interior of $\psi(S)$.
\end{corollary}
}

\section{Fattenings} \label{sec fat}

We are trying to prove that  for every two locally flat embeddings $\phi,\psi: X \to \BS^3$, there exists a homeomorphism $\alpha : \BS^3 \to \BS^3$ such that $\psi= \alpha \circ \phi$. Our strategy is to build $\alpha$ as a combination of homeomorphisms, each mapping a chamber of $\phi$ to one of $\psi$. Since chambers are bounded by local surfaces, our task would be easier if we knew that each local surface is homeomorphic to a sphere. This is however not always the case, as a local surface may intersect itself along some edges or vertices of $X$ as remarked after its definition, see \fig{fig fat}. To avoid this difficulty we will now extend $X$ into a super-complex ${\rm fat}(X)$ called a \defi{fattening}. The idea is to glue a layer of prism-shaped `bricks' along the boundary of each chamber $C$ of $X$, with one brick being attached onto each directed face of $\partial C$. The sides of the bricks that are not contained in $\partial C$ or in another brick will then always form a surface, bounding a chamber lying inside $C$. The interior of each such brick will also form a new chamber inside $C$. 

\begin{figure}[htbp]
\vspace*{-0mm}
\centering
\noindent
 \includegraphics[scale=0.4]{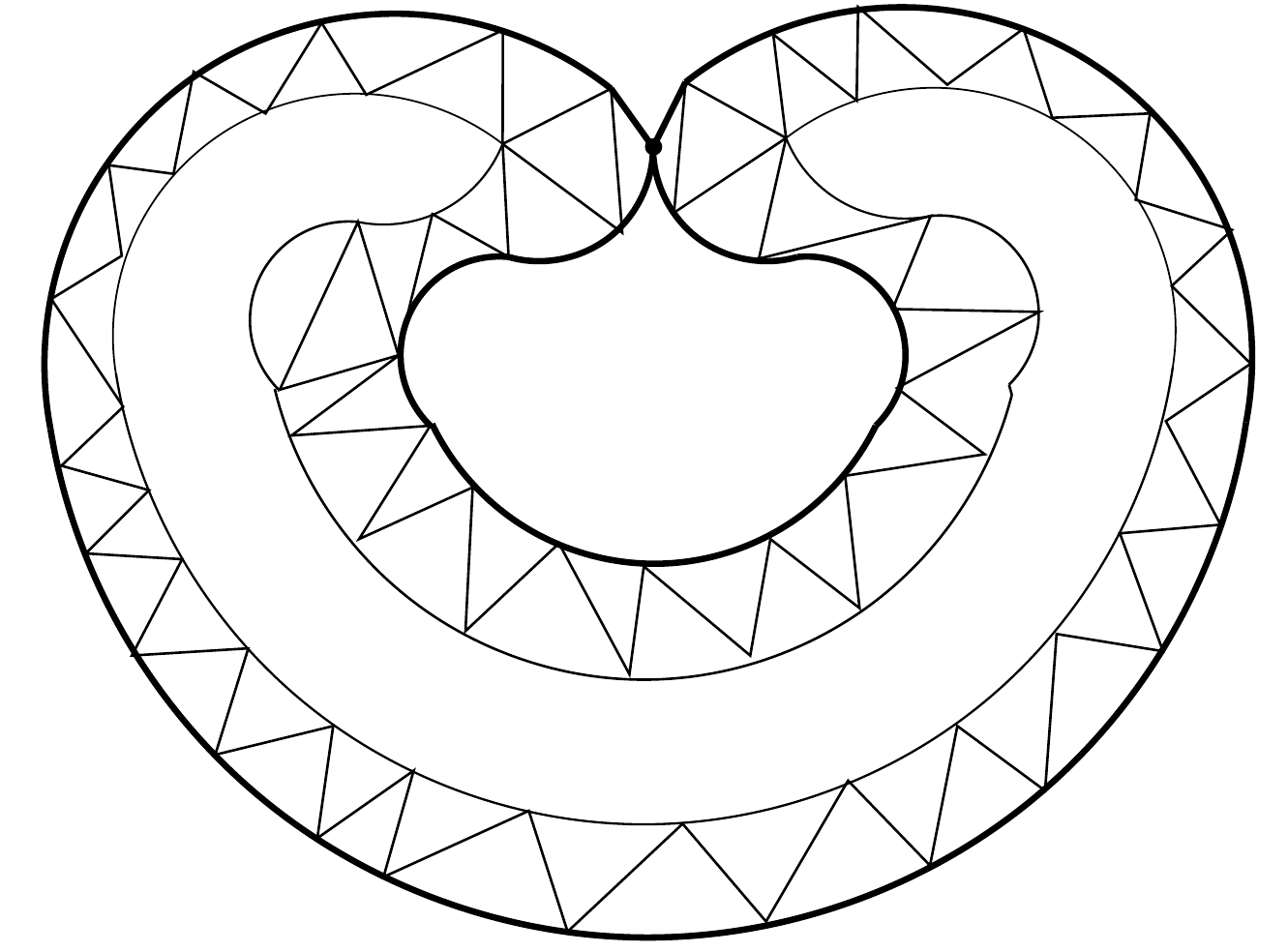}
\begin{minipage}[c]{0,95\textwidth}
\caption{A chamber $C$ may fail to be bounded by a surface, because e.g.\ its intersection with a plane in $\BS^3$ may look like the bold curve in the figure. But after glueing some prism-shaped bricks onto $\partial C$ (shaded area), we will ensure that the new chamber mirroring $C$ is  bounded by a surface (\Lr{rot fat}).}
\end{minipage}\label{fig fat} 
\end{figure}

The actual construction of ${\rm fat}(X)$ is however more technical than the above picture suggests, because rather than being given an embedding of $X$ defining its chambers, we will only be given a planar rotation system $\sigma$ on $X$, and we will construct ${\rm fat}(X)= {\rm fat}(X,\sigma)$ abstractly. Instead of chambers, we will have to work with the local surfaces and local disks of $X$ \wrt\ $\sigma$.

\subsection{Construction}

Now we formally define the fattening. Let $X$ be a locally $2$-connected simplicial $2$-complex, let $\sigma$ be a planar rotation system, and $S$ a local surface with respect to $\sigma$.
For each $S$-local-disk $D$ of $X$ \wrt\  $\sigma$ (at some vertex $w$), we introduce a new vertex $v_D$. For each directed face $f= \langle v_{1}, v_{2}, v_{3}\rangle   \in S$, we introduce a new face $h_f = \{ v_{D_1}, v_{D_2}, v_{D_3}\}  $ where $D_i$ is the $S$-local-disk at $v_i$ containing $f$. The 0-skeleton of ${\rm fat}(X)$ will be the union of $X^0$ with all the new vertices of the form $v_D$. Note that each vertex $w$ of $X$ gives rise to a number of new vertices of ${\rm fat}(X)$ equal to the number of $S$-local-disks containing $w$, which is at least one for each local surface containing $w$.

The new vertices and faces of ${\rm fat}(X)$ that we just introduced are not yet connected to $X$; they can be imagined as hovering inside the chambers of $X$, mirroring the boundaries thereof. However, it is important to remember that ${\rm fat}(X)$ is being defined \wrt\ an abstract rotation system rather than \wrt\ an embedding in $\BS^3$, and so the chambers of $X$ have to be imagined for now. 

We now connect the new vertices to $X$ by introducing new `rectangular' faces, each containing two new and two old vertices, so that each such rectangle has one side on an old face $f$ and the opposite side on $h_f$. We do so in such a way that  the space between each local surface of $X$ and its imagined mirror image is tiled by `bricks', with each brick having the shape of a prism between faces $f$ and $h_f$  (this picture will become more meaningful later when we define the embedding of ${\rm fat}(X)$).
These new faces are defined as follows. For each two directed faces $f$ and $h$ locally related  via an edge $uw$, we consider the  $S$-local-disk $D$ at $u$ containing $f$ and the  $S$-local-disk  $D'$ at $w$ containing $h$, so that $\{f,h\} \subseteq D\cap D'$. 
We introduce \mymargin{changed this} 
a new `rectangular' face $R_{f,h}:= [u, w, v_{D'}, v_{D}]$.


The resulting $2$-complex after all the above additions to $X$ is called a \emph{fattening} of $X$ with respect to $\sigma$, and we denote it by ${\rm fat}(X)={\rm fat}(X,\sigma)$. 

\subsection{Embedability of ${\rm fat}(X)$}

Our next aim is to prove that any locally flat embedding of $X$ can be extended to a locally flat embedding of ${\rm fat}(X)$. Notice that every Whitney complex is locally $2$-connected by the definitions.

\begin{lemma} \label{rot fat}
Let $\phi: X \to \BS^3$ be a locally flat embedding of a locally $2$-connected, simplicial, $2$-complex $X$,  and let $\sigma$ be the planar rotation system induced by $\phi$.
Then there is a locally flat embedding $\phi': {\rm fat}(X,\sigma) \rightarrow \BS^3$ that extends $\phi$. 

Moreover, letting $\sigma'$ be the planar rotation system induced by $\phi'$, every local surface of ${\rm fat}(X,\sigma)$ with respect to $\sigma'$ is a surface.
\end{lemma}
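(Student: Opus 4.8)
The plan is to realise the combinatorial fattening geometrically, by gluing a thin collar of prism-shaped bricks onto $\phi(X)$ from inside each chamber, and then to read off the local surfaces of ${\rm fat}(X)$ directly from this picture. Since everything happens in small neighbourhoods of cells of $X$, the construction reduces to the standard local models supplied by local flatness (L1)--(L3). First I would fix, for each chamber $C$ of $\phi$, a thin collar of $\partial C$ inside $C$ in which the bricks will live. The key enabling fact, established in \Sr{sec loc sur}, is that a directed face touches a unique chamber and that the faces of a local surface $S$ all touch the same chamber $C_S$; this assigns each brick a well-defined chamber to sit in.

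For the placement of the new cells I would proceed vertex-first. Near $\phi(w)$, local flatness (L3) gives a homeomorphism of a neighbourhood onto $(B_1,\sh(L_X(w)))$, under which $\phi(X)$ becomes the cone over a planar embedding of $L_X(w)$ realising $\sigma^w$. The faces of this planar embedding are in bijection with the $S$-local-disks at $w$ (over all local surfaces $S$ through $w$), and each, coned from $\phi(w)$, is a solid region lying in a single chamber; I place $v_D$ in the region corresponding to $D$, close to $\phi(w)$. Next, for each directed face $f$ I place $h_f$ as a parallel copy of $\phi(f)$ pushed a little into the chamber it touches, with its corners at the previously placed vertices $v_{D_i}$, and I fill in the rectangles $R_{f,h}$ so that $f$, $h_f$ and the three rectangles bound a prism. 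Choosing the push-in distance small enough---and, in the infinite case, shrinking it towards the ends---guarantees that distinct bricks meet only along shared rectangles and that $\phi'$ is injective; continuity being clear, $\phi'$ is then an embedding. Local flatness of $\phi'$ is checked cell by cell against (L1)--(L3): interiors of $h_f$ and of rectangles are two-sided planar pieces, interior edge points see the correct number of incident faces, and at each vertex the induced link graph is planar by construction, so $\phi'$ induces a planar rotation system $\sigma'$.

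For the ``Moreover'' statement I would enumerate the local surfaces of ${\rm fat}(X,\sigma)$ with respect to $\sigma'$. Each brick contributes a prism boundary---its bottom $f$, its top $h_f$ and its three rectangles, all directed inward---which is a local surface homeomorphic to $\BS^2$, hence a surface. Every remaining local surface is a \emph{mirror surface}, namely the inner boundary of a chamber's collar, built from the top faces $h_f$ alone. Its vertices are exactly the new vertices $v_D$, and the faces of the mirror surface through $v_D$ are precisely the $h_f$ with $f\in D$; these form a single local disk, mirroring $D$. Thus there is a unique mirror-surface-local-disk at each $v_D$, which is the essential gain of the fattening: the self-intersections of the original local surface of $X$ (several local disks sharing one old vertex $w$) have been resolved by assigning a distinct new vertex $v_D$ to each local disk. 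Since $X$ is locally $2$-connected, \Lr{lem: disk is disk} shows each $S$-local-disk $D$ of $X$ is a genuine disk, so its mirror copy is too; hence the mirror surface is locally a disk at every vertex and is therefore a surface.

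The main obstacle I anticipate is the first part---producing $\phi'$ as a genuine \emph{embedding} rather than an abstract gluing---because consistency must be maintained simultaneously at vertices, edges and faces: the regions into which the various $v_D$ are pushed must be the correct chambers (so that the collar structure is globally coherent), and the rectangle over an edge $uw$ must match the bricks on both of its sides. Local flatness and \Lr{lem: disk is disk} are precisely what make the local models compatible, but verifying that the local pictures patch into one injective, locally flat map, with adequate control in the locally finite (non-compact) case, is the technically heaviest step. Once $\phi'$ is in hand, the surface property of the local surfaces follows quickly from the bijection between new vertices and local disks.
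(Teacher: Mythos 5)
Your proposal is correct and follows essentially the same route as the paper: build $\phi'$ locally inside the regions supplied by local flatness (the paper formalises your vertex/edge/face neighbourhoods as the nested balls $N_v\subseteq N_e\subseteq N_f$ and regions $W(v,f)\subseteq W(e,f)\subseteq W(f)$ of \Lr{lem: open ball}), and then classify the local surfaces of ${\rm fat}(X)$ into exactly your two types --- the prism boundaries (the paper's $T_f$) and the mirror surfaces (the paper's $S_{\rm fat}$) --- checking that each has a unique local disk at every vertex and invoking \Lr{lem: disk is disk}. The paper likewise leaves the stitching of the rectangular faces as ``tedious but straightforward,'' so your flagged obstacle matches where the paper also elides detail.
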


The proof is not difficult, and purely topological. Some readers will find it easy to prove \Lr{rot fat} by themselves, and such a reader can skip to \Sr{fat pres}. For all others we include the following proof.

In order to prove \Lr{rot fat} we will first show that there is enough space around the images of vertices, edges, and faces of $X$ where we can embed the newly introduced vertices, edges and faces of ${\rm fat}(X,\sigma)$. This is the purpose of the following technical lemma, which ensures that we can find some open balls disjoint from  $\phi(X)$ where we can embed those new cells. 

\begin{lemma}\label{lem: open ball}
Let $X$ be a locally $2$-connected, simplicial, $2$-complex, and $\phi$ a locally flat embedding of $X$ into $\BS^3$. Then for each $h\in X^0\cup X^1\cup X^2$, \ti\ a neighbourhood $N_h$ of $\phi(h)$ which is homeomorphic to an open ball and satisfies the following conditions.
For a local surface $S$, a vertex $v$, an edge $e$ containing $v$, two directed faces $f,g$ locally related via the edge $e$, and an $S$-local-disk $D$ at $f$, we have:
\begin{enumerate}
\item \label{N 0} $N_v\subseteq N_e\subseteq N_f$; 
\item \label{N i} for $h,h'\in X\cup X^1\cup X^2$ with $h\cap h'=\emptyset$, we have $N_h\cap N_{h'}=\emptyset$. If $h\cap h' = h''$ with $h''\in X^0\cup X^1$, then $N_h\cap N_{h'} = N_{h''}$;
\item \label{N ii} $\phi(D)\cap N_v$ divides $N_v$ into two regions. The region $W(v,f)$ that $f$ touches is homeomorphic to an open ball and disjoint from $\phi(X)$, and $W(v,f)=W(v,g)$;
\item \label{N iii}  $\phi(D)\cap N_e$ divides $N_e$ into two regions. The region $W(e,f)$ that $f$ touches is homeomorphic to an open ball and disjoint from $\phi(X)$, and $W(v,f)\subseteq W(e,f)$ and $W(e,f)=W(e,g)$; and
\item \label{N iv}  $\phi(D)\cap N_f$ divides $N_f$ into two regions.
 The region $W(f)$ that $f$ touches is homeomorphic to an open ball disjoint from $\phi(X)$ and $W(e,f)\subseteq W(f)$.
\end{enumerate}
\end{lemma}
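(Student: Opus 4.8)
The plan is to construct the neighbourhoods $N_h$ by hand from the three local flatness models, proceeding by increasing dimension of $h$, and to read off the regions $W$ as complementary regions of $\phi(X)$ inside these models. Throughout I would use local finiteness of $X$ together with the fact that $\phi$ is an embedding: around each cell only finitely many cells are relevant, and two disjoint closed cells have images at positive distance. This gives the room to shrink all the $N_h$ at the very end to enforce the clean intersections of condition~(\ref{N i}), and it is essentially the only place finiteness is used; the substance is local.

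For a vertex $v$, local flatness (L3) provides a neighbourhood with $(U_{\phi(v)}, U_{\phi(v)}\cap \phi(X)) \cong (B_1, \sh(L))$, where $L := L_X(v)$ is embedded in $\partial B_1 = \BS^2$ realising the planar rotation $\sigma^v$, and I take $N_v$ to be a concentric open ball. The crucial observation is that the $S$-local-disks at $v$ are exactly the cones over the faces of this embedding of $L$: tracing $\sigma^v$ around $v$ produces the facial walks of the embedding, and by \Lr{lem: disk is disk} (applicable since $X$ is locally $2$-connected) each such disk is a genuine closed disk, so its boundary is a \emph{simple} cycle $C$ of $L$ bounding an open face $\Delta$ of the embedding. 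Hence $\phi(D)\cap N_v$ is the cone over $C$, a disk through $\phi(v)$ with boundary circle on $\partial N_v$, and it divides $N_v$ into exactly two regions. I set $W(v,f)$ to be the cone over $\Delta$; since $\Delta$ is a face of the embedding it contains no edge of $L$, so $W(v,f)$ is disjoint from $\sh(L)=\phi(X)\cap N_v$, and being a cone over an open disk it is homeomorphic to an open ball. As every face of $D$ lies on $\partial\Delta$, we get $W(v,f)=W(v,g)$, establishing~(\ref{N ii}).

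For an edge $e=uv$ I would cover the arc $\phi(e)$ by finitely many neighbourhoods given by (L2), each homeomorphic to $(B_1,\sh(G))$ with $\phi(X)$ appearing as finitely many half-planes meeting along the line $\phi(e)$, patch them into a solid tube $N_e\supseteq N_u\cup N_v$ around $\phi(e)$, and arrange the patching to respect the cone structure near the endpoints so that $N_v\subseteq N_e$, giving~(\ref{N 0}). Two faces $f,g$ locally related via $e$ appear as two half-planes consecutive in $\sigma_e$; I set $W(e,f)$ to be the open solid wedge between them on the side $f$ touches. Consecutiveness guarantees that no other half-plane lies in this wedge, so $W(e,f)$ is disjoint from $\phi(X)$ and homeomorphic to an open ball, and $W(e,f)=W(e,g)$; matching the cone near $v$ to the wedge yields $W(v,f)\subseteq W(e,f)$, which is~(\ref{N iii}). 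The face case (L1) is analogous and easier: a neighbourhood $N_f$ of $\phi(f)$ with $\phi(X)\cap N_f$ a flat disk, $W(f)$ the component that $f$ touches, a ball disjoint from $\phi(X)$ containing $W(e,f)$, giving~(\ref{N iv}).

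The step I expect to be the main obstacle is not any single local picture---each follows from the corresponding flatness model, with \Lr{lem: disk is disk} doing the real work at vertices---but rather making the three families fit together \emph{coherently}: securing simultaneously the nestings $N_v\subseteq N_e\subseteq N_f$ and $W(v,f)\subseteq W(e,f)\subseteq W(f)$, the clean intersections $N_h\cap N_{h'}=N_{h''}$ of~(\ref{N i}), and that every $W$ stays a ball disjoint from $\phi(X)$. I would handle this by building the neighbourhoods with compatible collar (product) structures along the lower-dimensional cells, so that near a shared cell the larger model restricts to the smaller one, and then performing a final uniform shrinking, justified by the positive-distance observation above, to enforce the disjointness in~(\ref{N i}). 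This bookkeeping, rather than any deep topology, is where the care lies.
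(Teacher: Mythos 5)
Your proposal follows essentially the same route as the paper's proof: build $N_v,N_e,N_f$ from the local flatness models (L3), (L2), (L1) respectively (the paper takes $N_f=\bigcup_{x\in\phi(f)}U_x$ etc.\ and shrinks to get the clean intersections of (ii)), use \Lr{lem: disk is disk} to see that an $S$-local-disk at $v$ traces a simple facial cycle of the embedded link graph so that the region $f$ touches is a ball disjoint from $\phi(X)$, and then propagate the two-region/ball structure from vertices to edges to faces. The identification of the "wedge" at an edge and the gluing along $\phi(f)\cap N_e$ at a face (where the paper invokes Seifert--van Kampen) are the same steps you describe, so the proposal is correct and not materially different.
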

\begin{proof}
As $\phi$ is a locally flat embedding, for each $x\in \phi(X)$, there exists an open neighbourhood $U_x$ satisfying (L1)--(L3) as in the definition of local flatness. We can shrink each $U_x$ if necessary to ensure that the open neighbourhood $N_f:=\bigcup_{x\in \phi(f)}U_x$ of $\phi(f)$ is homeomorphic to an open ball, and so is $N_e:= \bigcup_{x\in \phi(e)}U_x$. We just let $N_v = U_{\phi(v)}$. Then \ref{N 0} is satisfied.

By shrinking the $U_x$ further if necessary, we can easily also achieve that any non-empty intersection $N_h\cap N_{h'}$ is due to an intersection of $h$ and $h'$, establishing \ref{N i}.

As $N_v$ is an open ball satisfying (L3), the pair $(N_v,N_v\cap \phi(X))$ is homeomorphic to $(B_1,U_{L_X(v)})$ as in the definition of local flatness. In $(B_1,U_{L_X(v)})$, imagine the link graph $L_X(v)$ is drawn on the boundary of the open ball $B_1$ to induce $U_{L_X(v)}$.
By Lemma~\ref{lem: disk is disk}, the $S$-local-disk $D'$ at $v$ corresponds to a cycle in this planar drawing, which separates $\partial B_1$ into two regions. As the faces $f_1,\dots, f_s$ on $D'$ are locally related and $L_X(v)$ is $2$-connected, the region that $f$ touches contains no other points of $\phi(X)$ and is homeomorphic to an open ball. As $f,g$ are locally related, this is the region that $g$ touches as well.
Thus we have \ref{N ii}.

Suppose $e=uv$. Recall that $N_e$ is the union of $U_x$ for each $x\in \phi(e)$, and each $U_x$ satisfies (L2) and (L3). 
As (L2) and \ref{N ii} implies that $(U_x,U_x\cap \phi(D))$ is homeomorphic to $(\mathbb{R}^3,\mathbb{R}^2)$ for all $x\in \phi(e)$, we conclude that $N_e\cap \phi(D)$ divides $N_e$ into two regions. 
The region $W(e,f)$ that $f$ touches contains both $W(v,f)$ and $W(u,f)$. By our choice of $U_x$ for each $x\in {\rm int}(\phi(e))$, $W(e,f)$ does not intersect with $\phi(X)$ outside $W(v,f)\cup W(u,f)$. However, \ref{N ii} ensures each of $W(v,f)$ and $W(u,f)$ is homeomorphic to an open ball, hence we conclude that $W(e,f)$ is also homeomorphic to an open ball. 
\COMMENT{
For each $x\in {\rm int}(\phi(e))$, the set $U_{\phi(x)}\cap \phi(D)$ separates the open ball $U_{\phi(x)}$ into two regions by (L2).  Of these two regions we let $W(x,f)$ denote the one that $f$ touches. By (L2), this region does not intersect with $\phi(X)$.
Indeed, $W(e,f)$ is the union of $W(u,f)$ and $W(v,f)$ and $ W(x,f)$ for $x\in {\rm int}(\phi(e))$. 
These $W(x,f)$'s are all homeomorphic to an open ball as $U_x$ satisfies (L2). Hence $W(e,f)$ is also an open ball.} 
As $f$ and $g$ are locally related via $e$, $W(e,f)$ is also the region which $g$ touches.
By (L2), we know that $W(e,f)$ does not intersect with $\phi(X)$. Hence \ref{N iii} holds.

Finally, for each $x\in {\rm int}(\phi(f))$, the pair $(N_x, N_x\cap \phi(D))$ is homeomorphic to $(\mathbb{R}^3,\mathbb{R}^2)$. Hence, $N_x\cap \phi(D)$ divides $N_x$ into two regions, and the region $W(x,f)$ which $f$ touches is homeomorphic to an open ball. Thus $W(f) = \bigcup_{x\in \phi(f)} W(x,f)$ contains $W(e,f)$.  
Moreover, $N_f\cap \phi(D)$ is obtained from gluing $N_e\cap \phi(D)$ with $\phi(f)$ along with $\phi(f)\cap N_e$.
As $\phi(f)\cap N_e$ is a disk, by the Seifert-van Kampen theorem, $N_f\cap \phi(D)$ is a disk. Hence $N_f\cap \phi(D)$ divides $N_f$ into two regions and $W(f)$ is the region that $f$ touches. By \ref{N ii}--\ref{N iii} and (L1), $W(f)$ does not intersect with $\phi(X)$. Hence \ref{N iv} holds.
\end{proof}

We can now prove the main result of this section.

\begin{proof}[Proof of \Lr{rot fat}]
Let $\sigma$ be the planar rotation system induced by the embedding $\phi$.
Consider a local surface $S$ and directed face $f = \langle v_1,v_2,v_3\rangle  $ in $S$.
For each $i\in [3]$, let $g_i$ be the directed face locally related to $f$ via the edge $v_iv_{i+1}$, where here and below all subscripts are taken modulo three.
Let $D_{i}$ be the the $S$-local-disk at $v_{i}$  containing $f$.

By Lemma~\ref{lem: open ball}, we obtain open balls $W(v_i,f) \subseteq W(v_iv_{i+1},f) \subseteq W(f)$ satisfying (i)--(iv) of Lemma~\ref{lem: open ball}. 
As $W(f)$ is an open ball, we can embed a closed disk $D_f$ in $W(f)$ in the following way:
 its boundary is the union of three curves $C_{f,1}, C_{f,2}, C_{f,3}$ where $C_{f,i}$ lies in $W(v_iv_{i+1},f)= W(v_iv_{i+1},g_i)$ for each $i\in [3]$, and the point $C_{f,i}\cap C_{f,i+1}$ lies in $W(v_{i+1},f)$.  We can further ensure that all those disks $D_f$ are disjoint from each other for different $f$.

Consider two directed faces $f,g\in S$ locally related via an edge $uv$, and consider the $S$-local-disk $D$ at $u$ containing $f,g$, and the $S$-local-disk $D'$ at $v$ containing $f,g$.
Then we can continuously deform the disks to merge the corresponding boundaries  $C_{f,i}$ and $C_{g,j}$ (where $i,j$ are indices such that both $C_{f,i}$ and $C_{g,j}$ lie in $W(v_iv_{i+1},f)$) except the parts where they intersect $W(v_i,f)\cup W(v_{i+1},f)$.
For two faces $f,f'$, the sets $W(f)$ and $W(f')$ are disjoint if $f\cap f'=\emptyset$ by \ref{N i}, and $W(f)\cap W(f')=W(h,f)$ if $h$ is the cell consisting of the vertices in $f\cap f'$. Hence we can perform this deformation without two disks intersecting except at the merged boundaries.

Now for each vertex $v$, and each $S$-local-disk $D$ at $v$, let $D=\{f_1,\dots, f_s\}$ with $f_i$ locally related to $f_{i+1}$ via an edge $vu_i$.
Assume that a part of $C_{f_i,j}$ and a part of $C_{f_{i+1},j'}$ are merged. 
Then the not-yet-merged parts of $C_{f_i,j}$ and $C_{f_{i+1},j'}$ lie in $W(v,f_i)=W(v,f_{i+1})$ and $W(u,f_i)$. The parts in $W(v,f_i)$ all together form a topological circle in $\overline{W(v,f_i)}$.
Hence, we can continuously deform this circle into $s$ curves sharing one endpoint by merging the remaining part of $C_{f_i,j}$ and the remaining part of $C_{f_{i+1},j'}$. Furthermore, this will ensure that the point on the boundaries of $D_{f_1},\dots, D_{f_s}$ corresponding to $v$ are all identified.

By mapping each face $h_f$ as in the definition of ${\rm fat}(X)$ 
 into $D_{f}$ in such a way that each boundary edge of $h_f$ maps to $C_{f,1},C_{f,2}, C_{f,3}$, we obtain an embedding of all cells $h_f$ for each $f\in X$. Let $\phi''$ be the resulting embedding of all such disks $D_f$. Then $\phi \cup \phi''$ extends $\phi$ by embedding all the faces of the form $h_f$,  that mirror the original faces $f$ of $X$. It remains to embed the `rectangular' faces of the form $R_{f,h}$ 
that we used to connect the $h_f$ back to $X$. \mymargin{I removed some details here} This can be done similarly to the definition of $\phi''$ by introducing discs inside balls of the form 
$W(uv,f)$, using again appropriate deformations to stitch them together where needed. We omit the rather tedious, though straightforward, details.

\comment{
Moreover, for each locally related directed faces $f,g$ and $f\cap g =\{u,u'\}$ with a $S$-local-disk $D$ at $u$ and a $S$-local-disk $D'$ at $u'$ and $\{f,g\}\subseteq D\cap D'$, 
the image $\phi'(v_Dv_{D'})$ of the edge lies in $W(uu',f)$ which is homeomorphic to an open ball, and $\phi(uu')$ lies in the boundary of $W(uu',f)$.\COMMENT{As $W(uu',f)$ is the region we obtain from cutting $N_{uu'}$ by $\phi(D)\cup \phi(D')$.}

Let $D'_i$ be the $S$-local-disk at $u_i$ containing directed face $f_{i}$.
Note the $\phi'$-image of $\{h_{f_i} : i\in [s]\}$ is a disk and it divides $W(vu_i,f)$ into regions. As $\phi(vu_i)$ and $\phi'(v_Dv_{D'_i})$ lies in the closure of the same component, 
we can find another topological disk $D(f_i,f_{i+1})$ in the component
whose boundary consists of four curves
$\phi'(v_Dv_{D'_i})$ and $\phi(vu_i)$ and a curve $P_{f_i,f_{i+1}}(v)$ connecting $\phi(v)$ and $\phi'(v_D)$ and  a curve $P_{f_i,f_{i+1}}(u_i)$ connecting $\phi(u_i)$ and $\phi'(v_{D'_i})$.
Furthermore, we can ensure that all $D_{f_i,f_{i+1}}$ are disjoint except at the points $\phi(v)$ and $\phi'(v_{D})$.
This is possible as $W(vv_{i}, f_i)\cap W(vv_{i'},f_{i'})= W(v,f_i)$ and they are all open balls.

The curves $P_{f_i,f_{i+1}}(u)$ all lie in $W(u,f_i)$\COMMENT{For $i\neq i'\in [s]$, we have $W(u,f_i)=W(u,f_{i'})$}
and all of them shares the same ends $\phi(v)$ and $\phi'(v_D)$. So we can continuously move each $D(f_i,f_{i+1})$ to merge the boundary $P_{f_i,f_{i+1}}(u)$ to obtain one curve without introducing any intersection of the disks except the curves $P_{f_i,f_{i+1}}(u)$ merged.
We do this for each local surface $S$ then}
We thus obtain an embedding $\phi'$ of ${\rm fat}(X)$ which coincides with $\phi$ when restricted on $X$. 
This embedding is locally flat by construction.

\medskip 
We now prove the second part of our statement.
In order to check that a local surface $S'$ of ${\rm fat}(X)$ is a surface, it is sufficient to prove that there exists at most one $S'$-local-disk at $v$ for each vertex $v$. Indeed, Lemma~\ref{lem: disk is disk} implies that if there exists at most one $S'$-local-disk at $v$ for each vertex $v$, then $S'$ is locally a topological disk around each of its vertices, and so it is a $2$-manifold. 

We consider two types of local surfaces of ${\rm fat}(X)$, and show that each local surface belongs to one of these two types.
The first type will be denoted by $S_{\rm fat}$. For each local surface $S$ of $X$ with respect to $\sigma$, consider $S_{\rm fat}:= \{h_f: f\in S\}$ with $h_f$ directed in the same way as $f\in S$ is directed.
If $f,g$ are locally related, $h_f$ and $h_g$ are locally related too by the construction of $\phi'$. Hence, $S_{\rm fat}$ is indeed a local surface of ${\rm fat}(X)$ with respect to $\sigma'$.

The other type will be denoted by $T_f$.
For each directed face $f = \langle v_1,v_2,v_3 \rangle\in S$, let $D_i$ be the $S$-local-disk at $v_i$ containing $f$. There is exactly one directed face $g_i$ \st\ $\{f,g_i\} \subseteq D_i\cap D_{i+1} $ and $f,g_i$ are locally related via the edge $v_iv_{i+1}$.
Let $T_{f} := \{f, h_f^{-1}, R_{f,g_i} :  i\in [3]  \}$ where 
the rectangular faces $R_{f,g_i}$ are directed so that they are locally related, and each of them is locally related to $f$ and $h_f^{-1}$.
Then $T_{f}$ is a local surface of ${\rm fat}(X)$ with respect to $\sigma'$.

Each directed face of ${\rm fat}(X)$ belongs either to a local surface of type $S_{\rm fat}$ or a local surface of type $T_f$. Thus each local surface of ${\rm fat}(X)$ belongs to one of those types. 

We first verify our statement for a local surface of type $S_{\rm fat}$. 
First, note that each vertex in $S_{\rm fat}$ is of type $v_D$ for some $S$-local disk $D$. Moreover, as the  local-relatedness of $f$ and $g$ implies the local-relatedness of $h_f$ and $h_g$, the $S_{\rm fat}$-local disks are exactly $\{h_f: f\in D\}$ for some $S$-local disk $D$. Conversely, for each $S$-local disk $D$, the set $\{h_f: f\in D\}$ is an $S_{\rm fat}$-local disk. 
As we have constructed ${\rm fat}(X)$ so that $v_D$ and $v_{D'}$ are (even if $D$ and $D'$ are $S$-local disk at the same vertex) different vertices for two distinct $S$-local disks $D\neq D'$, we conclude that $\{h_f: f\in D\}$ is the unique $S_{\rm fat}$-local disk at $v_D$.
This shows that for every vertex $v_D$ of $S_{\rm fat}$, there exists exactly one $S_{\rm fat}$-local disk $\{h_f: f\in D\}$ at $v_D$ as desired.

We now consider a local surface of the second type $T_f$.
From the construction of $T_f$, it is easy to see that any two of its directed faces sharing an edge are locally related. Thus for each vertex $v$ in $T_{f}$, there exists a unique $T_{f}$-local disk at $v$, which is exactly the collection of directed faces in $T_{f}$ containing $v$.

We have checked that in all cases, there is exactly one $S'$-local disk at $v$ for any vertex $v$ of $S'$ as claimed.
\end{proof}

\comment{
	\subsection{Local surfaces of ${\rm fat}(X)$ are $2$-manifolds} \label{fat surf}

Next, we prove that ${\rm fat}(X)$ fulfils the purpose for which it was introduced: each of its local surfaces is a $2$-manifold:

\begin{lemma} \label{lem: g}
Suppose $\phi: X \to \BS^3$ is a locally flat embedding of a locally $2$-connected simplicial $2$-complex $X$. Let $\phi': {\rm fat}(X) \to \BS^3$ be a locally flat  embedding extending $\phi$. Let $\sigma'$ be the rotation  \mymargin{Instead of this lemma, can we please prove that every vertex $v$ has at most one $S$-local-disk as part of \Lr{rot fat}?} system induced by $\phi'$. Then every local surface $S$ of ${\rm fat}(X)$ with respect to $\sigma'$ is a $2$-manifold. 
\end{lemma}
\begin{proof}
Lemma~\ref{lem: disk is disk} implies that if there exists at most one $S$-local-disk at $v$ for each vertex $v$, then $S$ is locally a topological disk around each of its vertices, and so it is a $2$-manifold. Thus it is enough to show that every vertex $v$ has at most one $S$-local-disk.

Let $\sigma$ be the planar rotation system induced by $\phi$.
For each local disk $S'$ of $X$ with respect to $\sigma$, consider $S'_0:= \{h_f: f\in S\}$ with $h_f$ directed in the same way as $f\in S'$ is directed.
For each directed face $f = \langle v_1,v_2,v_3 \rangle\in S$, let $D_i$ be the $S$-local-disk at $v_i$ containing $f$,  and let $D_i\cap D_{i+1} \supseteq \{f,g_i\}$ where $f$ and $g_i$ are locally related via the edge $v_iv_{i+1}$.

It is easy to check that local surfaces of ${\rm fat}(X)$ with respect to $\sigma'$ is exactly the collection of all $S'_0$ and $T_f$ for all local surface $S'$ of $X$ and directed faces $f \in S'$. Also, by the definition of $S'_f, T_f$, each vertex appears in at most one of their local discs, as desired.
\end{proof}
}

\subsection{${\rm fat}(X)$ preserves the  Whitney property} \label{fat pres}

We conclude this section by proving that any fattening of a Whitney complex is itself Whitney.

\begin{lemma}\label{lem: 3-con}
Suppose that $X$ is a simply connected, Whitney, simplicial, $2$-complex,  $\phi: X \to \BS^3$ is a locally flat embedding, and $\sigma$ is the planar rotation system induced by $\phi$. Then every link graph of ${\rm fat}(X,\sigma)$ is a subdivision of a $3$-connected planar graph or a theta graph and every edge of ${\rm fat}(X,\sigma)$ belongs to at least three cells of ${\rm fat}(X,\sigma)$. In particular, ${\rm fat}(X,\sigma)$ is Whitney.
\end{lemma}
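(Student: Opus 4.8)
The plan is to analyse the cell structure of ${\rm fat}(X)={\rm fat}(X,\sigma)$ directly. Recall it has two kinds of vertices --- the original vertices $v\in X^0$ and one \emph{mirror vertex} $v_D$ per $S$-local-disk $D$ --- three kinds of edges --- the original edges of $X$, the \emph{vertical} edges $v\,v_D$, and the \emph{mirror} edges $v_D v_{D'}$ --- and three kinds of $2$-cells --- the original triangles, the mirror triangles $h_f$, and the rectangles $R_{f,g}$. I would establish the lemma in two independent parts: first that every edge lies in at least three cells (which forces $G({\rm fat}(X))$ to be the whole $1$-skeleton, and hence connected), and then that every link graph lies in $\mathcal{F}$.

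For the edge count I would treat the three edge types separately. An original edge $uw$ incident with $k$ faces of $X$ satisfies $k\ge 2$, since $k$ equals the degree of $u$ in the $2$-connected link graph $L_X(w)$ (every Whitney complex is locally $2$-connected); in ${\rm fat}(X)$ this edge lies in its $k$ original faces together with one rectangle for each consecutive pair of faces in the cyclic order $\sigma_{\overrightarrow{uw}}$, i.e.\ in $2k\ge 4$ cells. A mirror edge $v_D v_{D'}$ coming from a local relation of $f,g$ via an edge $uw$ lies in exactly the two mirror triangles $h_f,h_g$ and the rectangle $R_{f,g}$, hence in three cells. Finally a vertical edge $v\,v_D$, where $D=\{f_1,\dots,f_s\}$ is a local disk at $v$, lies in all $s$ rectangles of the cone over $D$; here $s\ge 3$, because $D$ corresponds to a facial cycle of the simple graph $L_X(v)$, which by \Lr{lem: disk is disk} is a genuine simple cycle and so has length at least $3$. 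Thus every edge lies in at least three cells, $G({\rm fat}(X))={\rm fat}(X)^1$, and this $1$-skeleton is connected because $X$ is connected and each mirror vertex is joined to $X^0$ by a vertical edge.

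For the link graphs I would compute the two types. At a mirror vertex $v_D$, writing $u_1,\dots,u_s$ for the rim of $D$ and $v_{Q_j}$ for the (pairwise distinct) mirror vertex sitting over the edge $vu_j$, the faces incident with $v_D$ are exactly the $s$ mirror triangles and the $s$ rectangles of the cone. So the vertices of $L_{{\rm fat}(X)}(v_D)$ are $v$ and $v_{Q_1},\dots,v_{Q_s}$; the mirror triangles contribute the rim cycle $v_{Q_1}v_{Q_2}\cdots v_{Q_s}$ on the $v_{Q_j}$, while the rectangles each contribute an edge joining $v$ to one $v_{Q_j}$. Hence $L_{{\rm fat}(X)}(v_D)$ is the wheel $W_s$, which for $s\ge 3$ is $3$-connected and planar. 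At an original vertex $v$ the only new neighbours are the mirror vertices $v_D$, one per local disk --- equivalently, one per face of the planar embedding of $L_X(v)$ determined by $\sigma^v$; the original faces at $v$ reproduce exactly the edges of $L_X(v)$, and the rectangle over an edge $vu_j$ inside a disk $D$ contributes an edge joining $u_j$ to $v_D$. Therefore $L_{{\rm fat}(X)}(v)$ is obtained from $L_X(v)$ by inserting, into each face of its embedding, a new vertex adjacent to all vertices of that face's boundary cycle. Since $L_X(v)\in\mathcal{F}$ is $2$-connected, every facial boundary is a simple cycle, so this augmented graph is a simple triangulation of $\BS^2$ on at least four vertices, and is therefore $3$-connected and planar.

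Both families of link graphs thus lie in $\mathcal{F}$, and together with the edge count this shows that ${\rm fat}(X)$ is Whitney. The only real content beyond routine incidence bookkeeping is the identification of the two link graphs: in particular, checking that the local disks at $v$ are precisely the faces of the planar embedding of $L_X(v)$, and that the resulting face-augmentation is a \emph{simple} triangulation, so that the classical fact that a simple triangulation of $\BS^2$ on at least four vertices is $3$-connected applies. I expect this verification --- together with the careful use of \Lr{lem: disk is disk} to guarantee both $s\ge 3$ and the simplicity of the rim cycles --- to be the main obstacle.
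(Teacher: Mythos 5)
Your proposal is correct and follows essentially the same route as the paper's proof: identify the link graph at a new vertex $v_D$ as a wheel, identify the link graph at an original vertex $v$ as $L_X(v)$ with a new vertex inserted into each face of its planar embedding (using the correspondence between $S$-local-disks at $v$ and faces of that embedding), and check the edge--cell incidences case by case. Your version is somewhat more explicit about the exact incidence counts and invokes the ``simple planar triangulation is $3$-connected'' fact where the paper argues directly that the augmented graph remains a subdivision of a $3$-connected planar graph, but these are presentational differences only.
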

\begin{proof}

Since $\phi$ is locally flat, for each $v\in X^0$ there exists an open neighbourhood $U_v$ of $\phi(v)$ such that $(U_v, U_v\cap \phi(X))$ is homeomorphic to $(B_1,U_{L_X(v)})$, where $B_1$ and $U_{L_X(v)}$ are as defined in \Sr{loc flat}.
This yields an embedding of the link graph $L_X(v)$ on the boundary of $B_1$, which is $\BS^2$. 
Note that each $S$-local-disk $D$ at $v$ corresponds to a face $C_D$ in this planar embedding. 
Thus $L_{{\rm fat}(X)}(v)$ is obtained from $L_{X}(v)$ by adding a vertex $v_D$ for each $S$-local-disk $D$, and making $v_D$ adjacent to all vertices in $C_D$. 
Note that $L_X(v)$ contains no parallel edges, hence no 2-cycle, because $X$ is assumed to be a simplicial complex. It follows that if $L_{X}(v)$ is a subdivision of a $3$-connected planar graph, then so is $L_{{\rm fat}(X)}(v)$. Similarly, if $L_X(v)$ is a cycle or theta graph, then $L_{{\rm fat}(X)}(v)$ is a $3$-connected planar graph.

For a newly added vertex $v_D$, the link graph $L_{{\rm fat}(X)}(v_D)$ is a wheel, 
so it is also a $3$-connected planar graph. Thus every link graph of ${\rm fat}(X)$ is a subdivision of a $3$-connected planar graph as claimed.

Moreover, as $X$ is locally $2$-connected, every edge $uv\in X^1$ belongs to at least two faces and there are two local disks $D$ and $D'$ containing a face $f$. Thus $uv$ belongs to at least one more face in ${\rm fat}(X)$.
Moreover, it is straightforward to check that every newly added edge in ${\rm fat}(X)^1 \setminus X^1$ also belongs to at least three cells.
\end{proof}

{\bf Remark:} In the last two sections we never assumed $X$ to be finite. Thus all the results we obtained can be applied to embeddings in $\R^3$ instead of $\BS^3$, because $\BS^3$ is the 1-point compactification of $\R^3$ and so any (locally flat) embedding in $\R^3$ gives rise to a (locally flat)  embedding in $\BS^3$. 

\section{Putting the pieces together to prove \Trs{Whit3D} and \ref{Whit3D inf}} \label{sec proof}

We are ready to prove our main theorems.
The following lemma states that we can establish a homeomorphism between $\phi$ and $\psi$ as in \Tr{Whit3D} if all local surfaces are surfaces. Since ${\rm fat}(X)$ satisfies this condition by Lemma~\ref{rot fat}, \Tr{Whit3D} will follow easily. 

\begin{lemma}\label{lem: base case}
Suppose that $X$ is a finite, simply connected, Whitney, simplicial $2$-complex and $\phi, \psi: X \to \BS^3$ are two locally flat embeddings.
If the $\phi$-image and $\psi$-image of every local surface $S$ of $X$ with respect to $\sigma$ is
 a surface, then there exists a homeomorphism $\alpha: \BS^3 \to \BS^3$ such that $\phi = \alpha \circ \psi$.
\end{lemma}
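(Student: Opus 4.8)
The plan is to construct $\alpha$ by pasting together one homeomorphism per chamber, placed on top of the combinatorial homeomorphism $\phi\circ\psi^{-1}\colon \psi(X)\to\phi(X)$, and using the Generalised Schoenflies \Tr{GST} to fill in the chambers.

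\emph{Reduction and chamber bijection.} First I would invoke \Lr{lem: Sigma}: the planar rotation systems induced by $\phi$ and $\psi$ coincide up to reversion. Reversing the rotation system of $\psi$ amounts to postcomposing $\psi$ with an orientation-reversing self-homeomorphism of $\BS^3$, which can be absorbed into the final $\alpha$; so I may assume that $\phi$ and $\psi$ induce the \emph{same} rotation system $\sigma$, and hence have exactly the same local surfaces. Recall from \Sr{sec loc sur} that every directed face touches a unique chamber and that locally related faces touch the same one, so each local surface $S$ touches a unique $\phi$-chamber $C_\phi(S)$ and a unique $\psi$-chamber $C_\psi(S)$.

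\emph{Identifying $\partial C$ with a local surface (the crux).} I would next show that $S\mapsto C_\phi(S)$ is a bijection from local surfaces onto $\phi$-chambers with $\phi(S)=\partial C_\phi(S)$, and likewise for $\psi$. By \Prr{chambers}, $\partial C$ is a union of faces; directing each such face into $C$ shows $\partial C=\bigcup\{\phi(S):C_\phi(S)=C\}$. By hypothesis each $\phi(S)$ is a surface, and it is connected (any two of its faces are joined by a chain of locally related faces whose images share edges); being a closed surface in $\BS^3$ it is orientable and two-sided, so it separates $\BS^3$ into two components (Jordan--Brouwer / Alexander duality). If two distinct local surfaces $S_1\ne S_2$ both touched $C$, then near an edge shared by $\phi(S_1)$ and $\phi(S_2)$ the connected set $C$ would meet both sides of $\phi(S_1)$, contradicting that $C$ lies in a single component of $\BS^3\setminus\phi(S_1)$. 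Hence exactly one local surface $S$ touches $C$; together with the connectedness of $\partial C$ (\Prr{pC conn}) this gives $\partial C=\phi(S)$, a surface, and $\partial C\cong\BS^2$ by \Lr{lem S2} (here I use that $X$ finite in $\BS^3$ makes $\phi$ accumulation-free with bounded chambers). The same argument for $\psi$ produces a bijection $C_\phi(S)\leftrightarrow C_\psi(S)$ between $\phi$- and $\psi$-chambers under which $\phi\circ\psi^{-1}$ restricts to a homeomorphism $\partial C_\psi(S)\to\partial C_\phi(S)$ of boundary spheres.

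\emph{Filling in the chambers and assembling $\alpha$.} Set $\alpha:=\phi\circ\psi^{-1}$ on $\psi(X)$. Fix a local surface $S$. Since $\phi$ is locally flat, the sphere $\partial C_\phi(S)$ is locally flat, so by \Tr{GST} the closure $\overline{C_\phi(S)}$ is a closed $3$-ball, and likewise $\overline{C_\psi(S)}$. The restriction of $\alpha$ to $\partial C_\psi(S)\to\partial C_\phi(S)$ is a homeomorphism of $2$-spheres, which extends over the balls to a homeomorphism $\overline{C_\psi(S)}\to\overline{C_\phi(S)}$ by coning (Alexander's trick). As $\BS^3$ is the union of $\psi(X)$ with the finitely many closed chambers $\overline{C}$, and these meet $\psi(X)$ exactly along their boundaries where all the definitions agree, the pasting lemma yields a continuous map $\alpha\colon\BS^3\to\BS^3$. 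It is a bijection, because the chamber bijection and the ball homeomorphisms match chamber interiors bijectively while $\alpha$ carries $\psi(X)$ onto $\phi(X)$; a continuous bijection of the compact Hausdorff space $\BS^3$ is a homeomorphism, and $\phi=\alpha\circ\psi$ holds by construction.

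\emph{Main obstacle.} The technical heart is the middle paragraph: translating the purely combinatorial notion of a local surface into the topological statement that each chamber boundary is a single local-surface image, hence a $2$-sphere. The separation argument ruling out two local surfaces meeting one chamber is the delicate point; once each $\partial C\cong\BS^2$, the Schoenflies filling and the pasting are routine.
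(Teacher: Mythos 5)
Your proposal follows the same overall route as the paper's proof: reduce to a common rotation system via \Lr{lem: Sigma}, show that each chamber boundary is the image of a single local surface, conclude $\partial C\cong\BS^2$ via \Lr{lem S2}, fill the chambers using \Tr{GST}, and paste. The one step you argue differently is the crux, namely why only one local surface can bound a given chamber. The paper notes that the set of directed faces touching $C$ is a disjoint union of local surfaces, hence (by hypothesis) its image is a surface, applies \Lr{lem S2} to get $\partial C\cong\BS^2$, and uses connectedness of $\BS^2$ to conclude that this union consists of a single local surface. You instead apply Jordan--Brouwer to each individual local surface image and argue that a second local surface touching $C$ along a shared edge would force the connected set $C$ to meet both sides of the first. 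Both arguments cover the cases where two local surfaces touching $C$ share an edge or are disjoint, but both are silent on the case where they meet only in a vertex: there your ``shared edge'' does not exist, and the paper's union of surfaces fails to be locally Euclidean at that vertex. Since this situation does not occur for ${\rm fat}(X)$, where the lemma is actually applied, neither proof is harmed, but it is worth flagging. One place where your write-up is tighter than the paper's: you extend the boundary homeomorphism $\phi\circ\psi^{-1}$ over each chamber by Alexander's trick after using \Tr{GST} to identify the closed chamber with a ball, whereas the paper composes the two Schoenflies automorphisms $\alpha_\psi\circ\alpha_\phi^{-1}$ and asserts, without justification, that this composite restricts on the boundary to the canonical map $\psi\circ\phi^{-1}$; your version avoids that gap.
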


We stated \Lr{lem: base case} for finite complexes embedded in $\BS^3$,  but the analogous statement for infinite, locally finite, complexes with accumulation-free embeddings in $\R^3$ with bounded chambers can be proved along the same lines using the remark at the end of the previous section, and it will be used in the proof of \Tr{Whit3D inf}.

\begin{proof}

Let $\sigma$ and $\rho$ be the planar rotation systems induced by $\phi$ and $\sigma$, respectively. 
By Lemma~\ref{lem: Sigma}, $\sigma$ and $\rho$ coinside up to reversion.
Hence, by taking a composition of an orientation-reversing automorphism with $\psi$, we may assume that $\phi$ and $\psi$ both induce the rotation system $\sigma$.

\begin{claim}\label{cl: sigma}
For each $\xi \in \{\phi,\psi\}$, there exists a bijection $i_{\xi}$ mapping $\xi$-chambers to local surfaces of $X$ with respect to $\sigma$ in such a way  that $\partial C = \xi(i_{\xi}(C))$ for every $\xi$-chamber $C$. 
\end{claim}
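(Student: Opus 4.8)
The plan is to realise $i_\xi$ as the inverse of the natural map $\Phi$ sending a local surface to the chamber that its faces touch. Recall from \Sr{sec loc sur} that every directed face touches a unique chamber and that locally related directed faces touch the same one; hence all directed faces of a fixed local surface $S$ touch a common chamber $\Phi(S)$, which defines $\Phi$. Moreover, if a directed face $f$ touches $C$ then $\xi(f)\subseteq \partial C$ (a face whose image meets $\partial C$ in an interior point lies in $\partial C$, by the discussion preceding \Prr{chambers}), so $\xi(S)\subseteq \partial\Phi(S)$. Surjectivity of $\Phi$ is immediate: every chamber $C$ has $\partial C\neq\emptyset$, and by \Prr{chambers} $\partial C$ is a union of faces, each of which, directed towards $C$, lies in some local surface mapping to $C$. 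The content of the claim is therefore that $\Phi$ is \emph{injective} and that $\partial C=\xi(\Phi^{-1}(C))$; granting this, I would set $i_\xi:=\Phi^{-1}$.

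Writing $\mathcal F_C$ for the set of directed faces touching $C$, the two assertions amount to showing that $\mathcal F_C$ is a single local surface (equivalently, connected under local relatedness) and that $\xi(\mathcal F_C)=\partial C$. The equality is the easier half: $\xi(\mathcal F_C)\subseteq\partial C$ was noted above, and conversely every face of $\partial C$ carries a direction touching $C$ and hence lies in $\mathcal F_C$; here I use \Prr{chambers} that $\partial C$ is a union of faces and \Prr{pC conn} that it is connected. Thus $\partial C=\xi(\mathcal F_C)$, and $\mathcal F_C$ decomposes as a disjoint union of the local surfaces mapping to $C$. So injectivity is precisely the statement that this decomposition has a single part.

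For injectivity I would fix a local surface $S$ with $\Phi(S)=C$. By hypothesis $\xi(S)$ is a compact, locally flat surface in $\BS^3$, hence orientable, two-sided, and separating, so $\BS^3\setminus\xi(S)=A\sqcup B$. Since the directed faces of $S$ all touch $C$ from one and the same side, the connected set $C$ lies entirely in one of them, say $C\subseteq A$. The crux is to prove $A=C$. For this I would mimic the argument of \Lr{lem S2}: the set $Z:=\BS^3\setminus C$ is the union of $X$ with all chambers other than $C$, and every loop in $Z$ is homotopic to one in $X$, so $Z$ is simply connected; combined with the connectedness of $X$ (so that $\xi(X)$ has no component hidden strictly inside $A$) this should force $\overline C$ to be exactly the closed complementary domain $\overline A$ of $\xi(S)$, whence $\partial C=\xi(S)$. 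In particular $\partial C$ is then a surface, so $\partial C\cong\BS^2$ by \Lr{lem S2}, and $\mathcal F_C=S$ is a single local surface. Injectivity of $\Phi$ and the boundary identity $\partial C=\xi(i_\xi(C))$ follow at once.

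The step I expect to be the main obstacle is exactly the identity $A=C$, i.e.\ ruling out that a single chamber is touched by two distinct local surfaces. This cannot follow from local considerations alone: nothing in the definition of local flatness prevents $C$ from occupying two distinct wedges around a common edge, which is precisely how two local surfaces could bound the same chamber (and each could still be a surface on its own). What must rule this out is the global hypothesis that $X$ is connected and simply connected. The guiding intuition is that two disjoint nested spheres would give a shell chamber bounded by two local surfaces, but joining them into one connected complex turns the configuration into a single embedded $\BS^2$ with only two ball-shaped chambers. Making this rigorous — via the separation properties of $\xi(S)$ in $\BS^3$ together with the simple-connectedness argument of \Lr{lem S2} — is the heart of the proof; the remaining bookkeeping (well-definedness of $\Phi$, surjectivity, and the inclusion $\xi(S)\subseteq\partial C$) is routine.
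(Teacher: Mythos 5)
Your setup (the touching map, the set $\mathcal F_C$ of directed faces touching $C$, its decomposition into local surfaces, surjectivity, and the identity $\xi(\mathcal F_C)=\partial C$) matches the paper's proof. But the one step you yourself flag as ``the heart of the proof'' --- injectivity, i.e.\ that $\mathcal F_C$ is a \emph{single} local surface --- is not actually proved: you reduce it to showing $A=C$ for one complementary domain $A$ of a single local surface $\xi(S)$, and then write that the simple-connectedness argument ``should force'' this. That reduction is both incomplete and harder than necessary. To get $A=C$ you would have to rule out that $A$ contains further pieces of $\xi(X)$ or further chambers, and your parenthetical appeal to connectedness of $X$ does not do this: a connected $X$ can perfectly well have material on the $A$-side of $\xi(S)$ reached by paths crossing $\xi(S)$, so connectedness of $\xi(X)$ alone does not confine it to $\overline{B}$. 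So as written the proposal has a genuine gap exactly where you predicted one.

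The paper closes this gap by running your final observation in the opposite direction. Since $\mathcal F_C$ is a disjoint union of local surfaces and \emph{every} local surface is assumed to be a surface, $\partial C=\xi(\mathcal F_C)$ is already a (possibly disconnected) surface --- no identification of $A$ with $C$ is needed first. Then \Lr{lem S2} (whose proof invokes \Prr{pC conn} for connectedness of $\partial C$ and simple connectedness of $X$) gives $\partial C\cong\BS^2$; since $\BS^2$ is not a disjoint union of two nonempty surfaces, the decomposition of $\mathcal F_C$ has a single part, which is injectivity. In short: you tried to prove ``$\partial C$ is one local surface, hence a sphere,'' whereas the hypothesis lets you prove ``$\partial C$ is a surface, hence a sphere, hence one local surface.'' The separation-theoretic machinery (two-sidedness, Alexander duality, $A\sqcup B$) can be dispensed with entirely.
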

By \Prr{chambers},  $\partial C$ is a union of $ \xi$-images of faces. Let $A$ be the set of those directed faces $f$ \st\ $\xi(f) \subseteq \partial C$ and $f$ touches $C$  in $\xi$. As $\sigma$ is a rotation system induced by $\xi$, the definition of local-relatedness implies that if a directed face $f$ touches $C$ in $\xi$, then any directed face $f'$ locally related to $f$ also touches $C$ in $\xi$.  Hence, $A$ is a disjoint union of local surfaces.

As we are assuming each local surface to be a surface, $A$ is a surface, so
Lemma~\ref{lem S2} together with the simple connectedness of $X$ implies that $\xi(A)$ is homeomorphic to $\BS^2$.
As $\BS^2$ is not a disjoint union of two nonempty surfaces,  $A$ is a local surface. Hence we have $\partial C = \xi(S)$ for a local surface $S$, and we let $i_{\xi}(C):= S$.
As every directed face touches a chamber, every local surface is an image of $i_{\xi}$. Thus $i_{\xi}$ is the desired bijection, and Claim~\ref{cl: sigma} is proved. \newline

By Claim~\ref{cl: sigma}, the map $i_{\psi}^{-1}\circ i_{\phi}$ is a bijection from the $\phi$-chambers to the $\psi$-chambers.
Moreover, for a $\phi$-chamber $C_{\phi}$, let $C_{\psi} = (i_{\psi}^{-1}\circ i_{\phi})(C_{\phi})$. We deduce that for some local surface $S$ of $X$, $\partial C_{\phi} = \phi(S)$ and $\partial C_{\psi} = \psi(S)$ are both surfaces. 
By Lemma~\ref{lem S2}, both $\partial C_{\phi}$ and $\partial C_{\psi}$ 
are homeomorphic to $\BS^2$. Applying the generalized Schoenflies \Tr{GST} 
to each of $\partial C_{\phi}$ and $\partial C_{\psi}$ we obtain automorphisms $\alpha_\phi, \alpha_\psi$ of $\BS^3$ that map the equator to $\partial C_{\phi}$ and $\partial C_{\psi}$, respectively. Note that  $\alpha_\phi, \alpha_\psi$ must map one of the two 3-dimensional balls into which the equator separates $\BS^3$ onto $C_{\phi}, C_{\psi}$, respectively, and we may assume \obda\ that it is the same ball in both cases. Therefore, $\alpha_\psi \circ \alpha_\phi^{-1} $ restricts to a homeomorphism $\alpha_{C_{\phi}} : C_{\phi}\cup \partial C_{\phi} \to  C_{\psi}\cup \partial C_{\psi} $ extending the canonical homeomorphism 
$\psi \circ \phi^{-1}$ from $\partial C_{\phi}$ to $\partial C_{\psi}$.

Finally, we define $\alpha$ to be the map obtained by taking the union of all $\alpha_{C_{\phi}}$ for all $\phi$-chambers $C_{\phi}$.
For two different $\phi$-chambers $C$ and $C'$, the domains of $\alpha_{C}$ and $\alpha_{C'}$ are $C\cup \partial C$ and $C'\cup \partial C'$, respectively. 
As $C$ and $C'$ are disjoint, the intersection of these domains lies in $\partial C \cap \partial C' \subseteq \phi(X)$.
Since both $\alpha_{C}$ and $\alpha_{C'}$ extend $\psi \circ \phi^{-1}$, these two maps coincides on $\partial C\cap \partial C' \subseteq \phi(X)$. As this holds for any two distinct $\phi$-chambers $C$ and $C'$, this union $\alpha$ defines a homeomorphism from $\BS^3$ to $\BS^3$ with $\phi= \alpha\circ \psi$ as desired.
\end{proof}

We can now put the ingredients together to prove our main results:

\begin{proof}[Proof of  \Tr{Whit3D}]

\COMMENT{Recall that $G(X)$ is the subgraph of $X^1$ consisting of edges belonging to at least three faces and $Y(X)$ is the set of vertices $u$ whose link graph $L_X(u)$ is a subdivision of $3$-connected planar graph or a theta graph. As $X$ is Whitney, the graph $G(X)$ is connected, and it covers $Y(X)$.

 Take the barycentric subdivision on every face of $X$ to obtain a simplicial complex $X'$.  For each vertex $u \in X'^0 \setminus X^0$, it is easy to see that the link graph $L_{X'}(u)$ is a cycle. 
 For each $u\in X^0$, the link graph $L_{X'}(u)$ is the subdivision of the link graph $L_{X}(u)$ in such a way that every edge is subdivided once.
  Hence, $Y$ is exactly the set of the vertices $u$ whose link graph $L_{X'}(u)$ in $X'$ is a subdivision of planar $3$-connected graph or a theta graph. As $G(X)$ covers $Y(X)$, the complex $X'$ is also Whitney. 
}

By considering \mymargin{I've commented the two above paragraphs out; they were proving that the barycentric subdivision of a Whitney complex is Whitney, but this is now automatic from my new definition of a Whitney complex in the non-regular case. Please check this is fine.} 
the barycentric subdivision of $X$ (defined in \Sr{ccs}), we may assume that  $X$ is a simplicial complex, because barycentric subdivisions do not affect local flatness. 

Let $\sigma$ be the rotation system of $X$ induced by the embedding $\phi$ and let $\rho$ be the rotation system induced by the embedding $\psi$.
By Lemma~\ref{lem: Sigma}, $\sigma$ and $\rho$ coincide up to reversion.
By composing an orientation-reversing automorphism with $\psi$ if necessary, we may assume that $\sigma$ and $\rho$ coincide (without a reversion), and so from now on we will let $\sigma$ denote the common planar rotation system of $\phi$ and $\psi$.

This rotation system allows us to define local surfaces. Consider a fattening ${\rm fat}(X)$ of $X$ with respect to $\sigma$. 
 By Lemma~\ref{rot fat} there are  embeddings $\phi', \psi' : {\rm fat}(X)\rightarrow \BS^3$ such that each of them coincides with $\phi, \psi$ respectively when restricted to $X$.
 By  Lemma~\ref{rot fat} and Lemma~\ref{lem: 3-con}, ${\rm fat}(X)$ satisfies the assumptions of Lemma~\ref{lem: base case}.
Hence, there exists a homeomorphism $\alpha: \BS^3 \to \BS^3$ such that 
$\phi' = \alpha \circ \psi'$. 
As each of $\phi'$ and $\psi'$ coincide, when restricted to $X$, with
$\phi$ and $\psi$, respectively, this implies that $\phi=\alpha\circ \psi$ as desired.

\end{proof}

\begin{proof}[Proof of  \Tr{Whit3D inf}]
We follow the lines of the above proof. Lemmas~\ref{lem: Sigma}, \ref{rot fat} and~\ref{lem: 3-con} used above can be applied to embeddings in $\R^3$ instead of $\BS^3$ by using the remark at the end of the previous section. 
As mentioned above, Lemma~\ref{lem: base case} also readily adapts to accumulation-free embeddings in $\R^3$.
\end{proof}

\acknowledgement{We thank Johannes Carmesin for several helpful discussions.} 

\bibliographystyle{plain}
\bibliography{collective}

\end{document}